\numberwithin{equation}{section}
\DeclareMathOperator{\domain}{Dom}
\DeclareMathOperator{\spann}{span}
\DeclareMathOperator{\sgn}{sgn}
\newtheorem{theor}{Theorem}[section]
\newtheorem{propo}[theor]{Proposition}
\newtheorem{lemma}[theor]{Lemma}
\newtheorem{corollary}[theor]{Corollary}
\newtheorem{defn}{Definition}[section]
\newtheorem*{rem*}{Remark}
\newcommand{\el}{l}
\begin{document}
\footnotetext{
\emph{2010 Mathematics Subject Classification:}  42A50, 42A38.\\
\emph{Key words and phrases:} orthogonal expansions, conjugacy, Riesz transforms, 
	conjugate Poisson integrals
	
Research of both authors supported by MNiSW Grant N201 417839.
}

\title[A symmetrized conjugacy scheme]{A symmetrized conjugacy scheme for orthogonal expansions}


\author[A. Nowak]{Adam Nowak}
\address{Adam Nowak, \newline
			Instytut Matematyczny,
      Polska Akademia Nauk, \newline
      \'Sniadeckich 8,
      00--956 Warszawa, Poland \newline
			\indent and \newline
			Instytut Matematyki i Informatyki,
      Politechnika Wroc\l{}awska,       \newline
      Wyb{.} Wyspia\'nskiego 27,
      50--370 Wroc\l{}aw, Poland      
      }
\email{Adam.Nowak@pwr.wroc.pl}

\author[K. Stempak]{Krzysztof Stempak}
\address{Krzysztof Stempak,     \newline
      Instytut Matematyki i Informatyki,
      Politechnika Wroc\l{}awska,       \newline
      Wyb{.} Wyspia\'nskiego 27,
      50--370 Wroc\l{}aw, Poland      }  
\email{Krzysztof.Stempak@pwr.wroc.pl}

\begin{abstract}
We establish a symmetrization procedure in a context of general orthogonal expansions associated with
a second order differential operator $L$, a `Laplacian'. Combined with a unified conjugacy scheme
furnished in our earlier article it allows, via a suitable embedding, to associate
a differential-difference `Laplacian' $\mathbb{L}$ with the initially given orthogonal system 
of eigenfunctions of $L$, so that
the resulting extended conjugacy scheme has the natural classical shape. This means, in particular,
that the related `partial derivatives' decomposing $\mathbb{L}$ are skew-symmetric 
in an appropriate $L^2$ space and they commute with Riesz transforms and
conjugate Poisson integrals. The results shed also some new light on the question of 
defining higher order Riesz transforms for general orthogonal expansions.
\end{abstract}

\maketitle

\section{Introduction} \label{sec:intro}
The seminal article of Muckenhoupt and E. M. Stein \cite{MS} initiated the investigation of conjugacy
for discrete and continuous nontrigonometric orthogonal expansions. 
In the recent years a considerable activity could be observed in studying conjugacy, or better Riesz
transforms, for orthogonal expansions  in one-dimensional and multi-dimensional settings related to 
general second order differential operators. 

A variety of papers has been devoted to the study of objects 
being ingredients of conjugacy notions defined by different authors in many particular situations.
In connection to a dynamic development of investigation of conjugacy problem in different settings, 
a natural demand appeared on a general and universal definition of Riesz transforms. 
The authors' paper  \cite{NS1} was an attempt to provide a reasonable answer to this important 
demand by offering a unified conjugacy scheme that includes definitions of Riesz transforms and 
conjugate Poisson integrals for a broad class of expansions.
The postulated definitions were supported by a ``good'' $L^2$-theory, existence of
Cauchy-Riemann type equations, and numerous examples existing in the literature 
which are covered by the scheme.

There is, however, a shortcoming of this unified conjugacy scheme manifested in a lack of symmetry 
in the decomposition
$$
L=A+\sum_{j=1}^d \delta^*_j\delta_j,
$$
of a given second order partial differential operator $L$, a `Laplacian', acting on functions on 
a $d$-dimensional domain
$\mathcal X=(b,c)^d$, $\infty\le b<c\le\infty$. Here $A\ge0$ is a constant, $\delta_j$ are `partial
derivatives' associated to $L$ (first order partial differential operators, $\delta_j$ acts on the $j$th
coordinate),  and $\delta_j^*$ are their formal adjoints in an appropriate $L^2$ sense. Riesz transforms
of first order defined in \cite{NS1} are formally given by $R_j=\delta_jL^{-1/2}$ (or by
$R_j=\delta_jL^{-1/2}\Pi_0$, see \cite{NS1} for details), but a replacement of $\delta_j$ by
$\delta_j^*$ in this definition is, in general, inappropriate since it may result in an operator
taking $L^2$ functions out of $L^2$.

Asymmetry of the decomposition of $L$  has, in fact, a deep impact onto the whole conjugacy scheme
postulated in \cite{NS1}. To be precise, taking into account existing examples, it seems that the case of
an operator $L$ acting on the whole space $\mathbb{R}^d$ is not really affected by this asymmetry.
Therefore, in what follows we consider only the case  $(b,c)\neq\mathbb R$, and assume, without any
loss of generality, that $0=b<c\le\infty$. Then a possible way of overcoming the lack of symmetry
is provided by a symmetrization procedure, which is the purpose and the main achievement of the paper.
This procedure is to some extent inspired by a situation of certain orthogonal systems appearing in the
theory of Dunkl operators, see Section \ref{sec:sym} for more comments.
`Partial derivatives' emerging from the symmetrization procedure, contrary to $\delta_j$'s, 
are skew-symmetric as it happens in many classical cases including, in particular, 
the usual Euclidean partial derivatives, Dunkl operators, left-invariant vector fields on Lie groups, etc.

Throughout the paper we use a fairly standard notation. 
The symbols $\Delta$ and $dx$ will always refer to the Euclidean Laplacian,
$\Delta=\sum_{i=1}^d\partial_{x_i}^2$, and Lebesgue measure acting, or considered,
on an appropriate domain in $\mathbb{R}^d$ like, for instance, $\mathbb{R}^d_{+}=(0,\infty)^d$.
The symbol $\mathbb{N}$ is used to denote the set of nonnegative integers, 
$\mathbb{N}=\{0,1,2,\ldots\}$. Finally, $\langle \cdot,\cdot \rangle_{\mu}$ denotes the canonical
inner product in an appropriate $L^2$ space, where $\mu$ is a given measure.

\section{Initial situation} \label{sec:ini}

Our starting point is the situation discussed in \cite[Section 2]{NS1}, where the concept of studying
conjugacy for orthogonal expansions is based on the existence of a second order differential operator
playing a similar role to that of the standard Laplacian in the classical harmonic analysis.
Below, $d \ge 1$ will always denote the dimension. 

We first consider the following one-dimensional
objects, which in a while will serve as building blocks of $d$-dimensional product structure:
\begin{itemize}
\item an open (possibly unbounded) interval $X \subset \mathbb{R}$;
\item a system $\{\mu_i : i=1,\ldots,d\}$ of absolutely continuous measures on $X$, 
	$\mu_i(dx_i)=w_i(x_i)dx_i$ with strictly positive densities $w_i \in C^2(X)$;
\item a system $\{L_i : i=1,\ldots,d\}$ of second order differential operators defined on $C_c^2(X)$.
\end{itemize}
Here, in this paper, we exclude the case $X=\mathbb{R}$ (see the comment in Section \ref{sec:intro}),
so without any loss of generality we may assume that $X=(0,c)$, $0 < c \le \infty$. 
For each of the operators $L_i$,
in order to ensure existence of the associated `derivative',  we assume the decomposition
\begin{equation} \label{decLi}
L_i = a_i + \delta_i^* \delta_i,
\end{equation}
where $a_i$ is a nonnegative constant, and $\delta_i$ is a first order differential operator
(a `derivative') of the form
$$
\delta_i = p_i(x_i) \frac{\partial}{\partial x_i} + q_i(x_i)
$$
with real coefficients $p_i \in C^2(X)$, $q_i \in C^1(X)$, $p_i(x_i) \neq 0$ for $x_i \in X$;
here $\delta_i^*$ represents the formal adjoint of $\delta_i$ in $L^2(X,\mu_i)$,
$$
\delta_i^* = -p_i(x_i)\frac{\partial}{\partial x_i} + q_i(x_i) - p_i(x_i) \frac{w'_i(x_i)}{w_i(x_i)}
	- p'_i(x_i)
$$
determined by the identity
$$
\langle \delta_i \varphi, \psi \rangle_{\mu_i} 
	= \langle \varphi, \delta_i^* \psi \rangle_{\mu_i}, \qquad \varphi,\psi \in C_c^1(X).
$$
Notice that $\delta^*\neq - \delta$.
Thus, a posteriori, each $L_i$ is a linear operator with continuous real-valued coefficients and
negative leading term coefficient,
\begin{align*}
L_i & = - p_i^2(x_i)\frac{\partial^2}{\partial x_i^2} - \bigg[ 2p_i(x_i)p'_i(x_i) + p_i^2(x_i)
	\frac{w'_i(x_i)}{w_i(x_i)} \bigg] \frac{\partial}{\partial x_i}\\ 
	& \quad +  q_i^2(x_i)
		- \big( p_i(x_i)q_i(x_i)\big)' - p_i(x_i)q_i(x_i) \frac{w'_i(x_i)}{w_i(x_i)} + a_i.
\end{align*}
Moreover, \eqref{decLi} implies that each $L_i$ is symmetric and nonnegative
on $C_c^2(X)\subset L^2(X,\mu_i)$.

Now we are in a position to specify a $d$-dimensional setting that is suitable for further development.
We equip the space $\mathcal{X} = X \times \ldots \times X$ ($d$ times) with the product measure
$$
\mu = \mu_1 \otimes \ldots \otimes \mu_d.
$$
We consider the $d$-dimensional `Laplacian' $L$ (more precisely, $L$ is a generalization of $-\Delta$)
defined initially on $C_c^2(\mathcal{X})$ by
$$
L = L_1 + \ldots + L_d,
$$
where each $L_i$ is understood as a one-dimensional operator acting on the $i$th axis.
Note that in view of the previous assumptions, $L$ admits the decomposition
$$
L = A + \sum_{i=1}^d \delta_i^* \delta_i, \qquad \textrm{where} \quad A = \sum_{i=1}^d a_i \ge 0;
$$
here the indices of $\delta$ and $\delta^*$ indicate also on which axes actions of these operators
take place. 

Next, we introduce an orthogonal system associated with $L$. With no loss of generality we may
restrict to $L^2$-normalized systems. Assume that for each $i=1,\ldots,d$, there exists an orthonormal
and complete in $L^2(X,\mu_i)$ system $\{\varphi^{(i)}_{k_i} : k_i \in \mathbb{N}\}$ 
consisting of eigenfunctions
of $L_i$, with the corresponding eigenvalues $\{\lambda_{k_i}^{(i)} : k_i \in \mathbb{N}\}$, i.e.
$L_i \varphi^{(i)}_{k_i} = \lambda_{k_i}^{(i)} \varphi^{(i)}_{k_i}$. Here and below we assume for
simplicity that $\varphi^{(i)}_{k_i} \in C^{\infty}(X)$, but in fact much less regularity is needed
(we omit a discussion in this direction since it could affect the main line of thought of the paper).
For a multi-index $k = (k_1,\ldots,k_d) \in \mathbb{N}^d$ we define
$$
\varphi_k = \varphi_{k_1}^{(1)} \otimes \ldots \otimes \varphi_{k_d}^{(d)}.
$$
Then $\{\varphi_k : k \in \mathbb{N}^d\}$ is an orthonormal basis in $L^2(\mathcal{X},\mu)$ consisting
of eigenfunctions of $L$,
$$
L \varphi_k = \lambda_k \varphi_k, \qquad \textrm{where} \quad 
	\lambda_k = \lambda_{k_1}^{(1)} + \ldots + \lambda_{k_d}^{(d)}.
$$
In addition, $\varphi_k \in C^{\infty}(\mathcal{X})$.

We impose the following technical assumptions on the systems of eigenvalues and eigenfunctions,
which seem to be unavoidable on the considered level of generality. For every $i=1,\ldots,d$, we assume
that the one-dimensional eigenvalues are indexed in the (strictly) ascending order,
$\lambda_0^{(i)}<\lambda_1^{(i)}< \lambda_2^{(i)}< \ldots$, and $\lim_{k_i} \lambda_{k_i}^{(i)}=\infty$.
Consequently, the set $\{\lambda_k : k \in \mathbb{N}^d\}$ of multi-dimensional eigenvalues may be arranged
into an increasing and divergent sequence
$$
\Lambda_0 < \Lambda_1 < \Lambda_2 < \ldots, \qquad 
	\Lambda_m \to \infty \quad \textrm{when} \quad m \to \infty.
$$
Moreover, we require the associated `partial derivatives' $\delta_i$ 
to be $L^2$-consistent with the orthogonal system, i.e. for each $i$
$$
\delta_i \varphi_{k_i}^{(i)} \in L^2(X,\mu_i), \qquad k_i \in \mathbb{N},
$$
and
$$
\langle \delta_i \varphi_{k_i}^{(i)}, \delta_i \varphi_{m_i}^{(i)} \rangle_{\mu_i}
= \langle \delta_i^* \delta_i \varphi_{k_i}^{(i)}, \varphi_{m_i}^{(i)} \rangle_{\mu_i}, \qquad
k_i,m_i \in \mathbb{N}.
$$
All these assumptions are not too restrictive, as may be seen by various examples given in 
\cite[Section 7]{NS1}.

In the situation described above it is not hard to check that the `Laplacian' $L$ is symmetric
and nonnegative on $C_c^2(\mathcal{X})\subset L^2(\mathcal{X},\mu)$, and the constant $A$ in the 
decomposition of $L$ does not exceed the smallest eigenvalue, $A \le \Lambda_0$,
cf. \cite[Lemma 1]{NS1}.
However, from the conjugacy point of view, the following fact is essential (see \cite[Lemma 2]{NS1}):
given $i=1,\ldots,d$, the `differentiated' system $\{\delta_i \varphi_k : k\in \mathbb{N}^d\}$
is orthogonal in $L^2(\mathcal{X},\mu)$;
furthermore, $\|\delta_i\varphi_k\|^2_{L^2(\mathcal{X},\mu)} = \lambda_{k_i}^{(i)}-a_i$.
This fact leads to investigating also `Laplacians' standing behind the systems $\{\delta_i \varphi_k\}$,
$i=1,\ldots,d$, and this turns out to be a crucial point in constructing proper conjugacy scheme for
general orthogonal expansions. Define
$$
M_j = A + \delta_j\delta_j^* + \sum_{i \neq j} \delta_i^* \delta_i = L + [\delta_j,\delta_j^*],
\qquad j=1,\ldots,d,
$$
where $[\delta_j,\delta_j^*]$ is the commutator
$$
[\delta_j,\delta_j^*] = \delta_j\delta_j^*-\delta_j^*\delta_j =
	2p_j(x_j)q'_j(x_j) - p_j(x_j)\bigg[ p_j(x_j)\frac{w'_j(x_j)}{w_j(x_j)} + p'_j(x_j)\bigg]'.
$$
By the very definition it follows that each $M_j$ is symmetric and nonnegative on 
$C_c^2(\mathcal{X})\subset L^2(\mathcal{X},\mu)$. Moreover, for each $j=1,\ldots,d$, the
system $\{\delta_j \varphi_k : k \in \mathbb{N}^d\}$ is an orthogonal system of eigenfunctions of $M_j$,
with the corresponding eigenvalues $\{\lambda_k : k \in \mathbb{N}^d\}$, i.e.
$$
M_j (\delta_j \varphi_k) = \lambda_k (\delta_j \varphi_k);
$$
see \cite[Lemma 5]{NS1}. The operators $M_j$ (or rather their suitable self-adjoint extensions)
are used to generate so-called modified Poisson semigroups that play an important role in the conjugacy
scheme for orthogonal expansions proposed in \cite{NS1}, see \cite[Section 5]{NS1} for details.

The main inconvenience of the theory postulated in \cite{NS1} is a lack of symmetry in principal
objects and relations, and this phenomenon has roots in the asymmetry between the `derivatives' $\delta_j$
and their adjoints $\delta_j^*$. 
In consequence, definitions and the conjugacy scheme established in \cite{NS1}
admit essential deviations from the classical shape, see \cite[Sections 5,6]{NS1}.

The main idea of this paper is to overcome the problem by
embedding the situation considered in \cite{NS1} into a more general setting, where
the associated derivatives are skew-symmetric and the related conjugacy scheme has precisely the classical
shape. The price is, however, that the related extended `Laplacian' and `derivatives' 
are differential-difference
operators rather than purely differential ones. It is remarkable that most definitions and relations
in the setting of \cite{NS1} may be then recovered by suitable `projecting' from the extended
situation. However, in some cases the projection procedure leads to different and seemingly even more
natural definitions. This remark concerns especially higher order Riesz transforms.

\section{Symmetrization} \label{sec:sym}

We now describe the symmetrization procedure and the resulting symmetrized situation. 
The construction is motivated to some extent by the setting of
the Dunkl harmonic oscillator with the underlying reflection group isomorphic to 
$\mathbb{Z}_2^d = \{0,1\}^d$;
we refer to \cite{R1} for more details concerning the Dunkl setting.
Recall that $X=(0,c)$ for some $0<c\le \infty$ and $\mathcal{X}=X^d$, and
consider the space $\mathbb{X}=X_{\textrm{SYM}}\times \ldots \times X_{\textrm{SYM}}$ ($d$-times), where
$X_{\textrm{SYM}} = (-c,0)\cup (0,c)$.
Notice that $\mathcal{X}$ is isomorphic to each of the
`Weil chambers' generated in $\mathbb{X}$ by reflections perpendicular to coordinate axes. 
We extend the measure $\mu$ to $\mathbb{X}$ by even extension of
the one-dimensional densities $w_i$, $w_i(-x_i) = w_i(x_i)$, $x_i>0$; we keep using the same symbols for the
extended objects. Further, we extend the coefficients of $L$ by letting
$$
p_i(-x_i) = p_i(x_i), \qquad q_i(-x_i) = -q_i(x_i), \qquad x_i >0;
$$
again the emerging extended objects, including $L$, $M_j$ and $\delta_j$
defined by means of the extended coefficients, are denoted by still the same symbols. 
\begin{defn}
For a suitable function $f$ on $\mathbb{X}$ define its `partial derivatives'
\begin{align*}
D_j f(x)  & = p_j(x_j)\frac{\partial f}{\partial x_j} (x) + q_j(x_j)\frac{f(x)+f(\sigma_j x)}{2} \\ & \quad
	+ \Big[ p_j(x_j) \frac{w'_j(x_j)}{w_j(x_j)} + p'_j(x_j)-q_j(x_j)\Big]\frac{f(x)-f(\sigma_j x)}{2},
\end{align*}
where $\sigma_j$ denotes the reflection in $\mathbb{X}$
in the hyperplane orthogonal to the $j$th coordinate axis,
$\sigma_j(x_1,\ldots,x_j,\ldots,x_d) = (x_1,\ldots,-x_j,\ldots,x_d)$.
\end{defn} 
The result below follows by integration by parts and some elementary manipulations.
\begin{propo}
The operators $D_j$, $j=1,\ldots,d$, are skew-symmetric in $L^2(\mathbb{X},\mu)$, $D_j^*=-D_j$,
in the sense that
$$
\langle D_jf,g\rangle_{\mu} = - \langle f, D_j g\rangle_{\mu}, \qquad f,g \in C^1_c(\mathbb{X}).
$$
\end{propo}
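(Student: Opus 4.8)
The plan is to reduce the $d$-dimensional identity to a one-dimensional one and then combine integration by parts with the reflection symmetry of the measure. Since $D_j$ acts only in the variable $x_j$ and $\mu$ is a product measure whose $j$th factor $\mu_j$ is invariant under $x_j\mapsto -x_j$ (the densities $w_i$ were extended evenly), Fubini's theorem reduces the claim to the single statement that the one-dimensional operator
\[
Df = p\,\partial_x f + q\,\frac{f+f\circ\sigma}{2} + r\,\frac{f-f\circ\sigma}{2},
\qquad r := p\frac{w'}{w} + p' - q,
\]
satisfies $\langle Df,g\rangle_{\mu_j} = -\langle f,Dg\rangle_{\mu_j}$ for $f,g\in C^1_c(X_{\textrm{SYM}})$; here $\sigma(x)=-x$ and the index $j$ has been suppressed. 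For a fixed configuration of the remaining variables the $x_j$-slices of $f$ and $g$ are again compactly supported in the disconnected interval $X_{\textrm{SYM}}=(-c,0)\cup(0,c)$, which is exactly what will make the boundary terms disappear.

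First I would integrate by parts the genuinely differential term. Writing $d\mu_j = w\,dx$ and using that the supports stay away from $0$ and from the endpoints $\pm c$, one obtains
\[
\int p\,(\partial_x f)\,g\,d\mu_j = -\int f\,p\,\partial_x g\,d\mu_j - \int f\,\Big(p' + p\frac{w'}{w}\Big) g\,d\mu_j,
\]
with no boundary contribution. Consequently, in the sum $\langle Df,g\rangle_{\mu_j}+\langle f,Dg\rangle_{\mu_j}$ the two purely first-order pieces $\mp\int f\,p\,\partial_x g\,d\mu_j$ cancel, and one is left with the single surplus term $-\int f\,(p'+pw'/w)\,g\,d\mu_j$ together with all the reflection (difference) terms.

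To handle those I would split $f$ and $g$ into their even and odd parts under $\sigma$, $f^{\pm}=(f\pm f\circ\sigma)/2$, and record that $p$ and $w$ are even while $w'/w$, $p'$, $q$, and hence $r$, are odd. Because $\mu_j$ is $\sigma$-invariant, every integral of an odd integrand vanishes, so only products of opposite parity survive: the four reflection terms collapse to $\int(q+r)\,(f^{+}g^{-}+f^{-}g^{+})\,d\mu_j$, while the surplus from the integration by parts, rewritten via $p'+pw'/w=r+q$, equals $-\int(q+r)\,(f^{+}g^{-}+f^{-}g^{+})\,d\mu_j$. These cancel, giving $\langle Df,g\rangle_{\mu_j}+\langle f,Dg\rangle_{\mu_j}=0$, as required. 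I expect the only real difficulty to be the bookkeeping of parities and the verification that the surplus term $p'+pw'/w$ produced by integrating the first-order part is precisely the combination $r+q$ supplied by the $q$- and $r$-difference terms. This is no accident: the difference coefficient $r=pw'/w+p'-q$ in the definition of $D_j$ is tuned exactly so that $p'+pw'/w=r+q$, and this identity is the algebraic heart of the skew-symmetry.
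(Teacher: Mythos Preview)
Your argument is correct and is exactly the kind of computation the paper has in mind: the paper gives no detailed proof, only the one-line remark that the result ``follows by integration by parts and some elementary manipulations,'' and your reduction to $d=1$, integration by parts of $p\,\partial_x f$, and parity bookkeeping showing that the surplus $p'+pw'/w$ matches $q+r$ is precisely such a verification. There is nothing to add.
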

This motivates the definition of the extended `Laplacian' $\mathbb{L}$ as
\begin{equation} \label{decL}
\mathbb{L} = A - \sum_{i=1}^d D_i^2.
\end{equation}
Then each $D_i$ commutes with $\mathbb{L}$, which is an important feature at this point.

To state the next result it is convenient to introduce the following terminology. Given
$\varepsilon \in \mathbb{Z}_2^d$, we say that a function $f$ on $\mathbb{X}$ is $\varepsilon$-symmetric
if $f \circ \sigma_j = (-1)^{\varepsilon_j}f$, $j=1,\ldots,d$.
If $f$ is $\varepsilon$-symmetric and $\varepsilon_{j_0}=0$ ($\varepsilon_{j_0}=1$)
then $f$ is said to be even (odd) with respect to the $j_0$th coordinate.
\begin{propo} \label{lem:L11}
The operator $\mathbb{L}$ is symmetric and nonnegative on $C_c^2(\mathbb{X})\subset L^2(\mathbb{X},\mu)$.
Moreover, for any $\varepsilon$-symmetric function $f\in C^2(\mathbb{X})$, 
$\varepsilon \in \mathbb{Z}_2^d$, we have
$$
\mathbb{L}f = Af + \sum_{\{j : \varepsilon_j = 0\}} \delta_j^*\delta_j f + 
	\sum_{\{j : \varepsilon_j = 1\}} \delta_j \delta^*_j f.
$$
In particular, $\mathbb{L}f = Lf$ when $f$ is even with respect to all coordinates, and 
$\mathbb{L}f = M_j f$ if $f$ is odd with respect to the $j$th coordinate and even with respect to the
remaining coordinates.
\end{propo}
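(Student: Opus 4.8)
The plan is to reduce everything to the preceding skew-symmetry proposition ($D_i^*=-D_i$) together with a parity analysis of the operators $D_j$ acting on $\varepsilon$-symmetric functions. For the symmetry and nonnegativity, I would first note that for $f,g\in C_c^2(\mathbb{X})$ the identity $\langle D_i f,g\rangle_{\mu}=-\langle f,D_i g\rangle_{\mu}$ applied twice gives
$$
\langle \mathbb{L}f,g\rangle_{\mu} = A\langle f,g\rangle_{\mu} + \sum_{i=1}^d \langle D_i f, D_i g\rangle_{\mu}.
$$
The right-hand side is manifestly symmetric in $(f,g)$, and setting $g=f$ yields $\langle \mathbb{L}f,f\rangle_{\mu}=A\|f\|_{L^2(\mathbb{X},\mu)}^2+\sum_i\|D_i f\|_{L^2(\mathbb{X},\mu)}^2\ge 0$ since $A\ge 0$. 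This settles the first assertion.

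For the action formula the heart is a parity dictionary. Since the extended densities $w_i$ and coefficients $p_i$ are even in $x_i$ while $q_i$ is odd, I would check directly from the definition that $D_j$ collapses to a first order differential operator on functions of definite parity in $x_j$: if $f$ is even in $x_j$ the difference part $\frac{f-f\circ\sigma_j}{2}$ vanishes and $D_j f=\delta_j f$, whereas if $f$ is odd in $x_j$ the average $\frac{f+f\circ\sigma_j}{2}$ vanishes and, after matching coefficients, $D_j f=-\delta_j^* f$. Moreover both $\delta_j$ and $\delta_j^*$ reverse the parity in $x_j$ (because $\partial_{x_j}$, multiplication by the even $p_j$, and multiplication by the odd $q_j$ each reverse it) while leaving the parity in every other variable untouched.

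Using this dictionary I would then compute $D_j^2 f$ on an $\varepsilon$-symmetric $f$ in two steps. When $\varepsilon_j=0$, $D_j f=\delta_j f$ is odd in $x_j$, so a second application gives $D_j^2 f=-\delta_j^*\delta_j f$; when $\varepsilon_j=1$, $D_j f=-\delta_j^* f$ is even in $x_j$, so $D_j^2 f=-\delta_j\delta_j^* f$. Substituting into $\mathbb{L}f=Af-\sum_j D_j^2 f$ produces exactly the claimed splitting
$$
\mathbb{L}f = Af + \sum_{\{j:\varepsilon_j=0\}}\delta_j^*\delta_j f + \sum_{\{j:\varepsilon_j=1\}}\delta_j\delta_j^* f.
$$
The last assertion is then immediate: choosing $\varepsilon=0$ recovers $L=A+\sum_j\delta_j^*\delta_j$, and choosing $\varepsilon$ with a single $1$ in position $j$ recovers $M_j=A+\delta_j\delta_j^*+\sum_{i\neq j}\delta_i^*\delta_i$.

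I expect no serious obstacle here; the only point demanding care is the coefficient matching that identifies $D_j f$ with $-\delta_j^* f$ on functions odd in $x_j$, namely verifying that $p_j\frac{w_j'}{w_j}+p_j'-q_j$ is precisely the zeroth order coefficient of $-\delta_j^*$, together with the attendant check that the coefficient parities are exactly those needed for $\delta_j$ and $\delta_j^*$ to reverse $x_j$-parity. Both are elementary once the extensions $p_i(-x_i)=p_i(x_i)$, $q_i(-x_i)=-q_i(x_i)$ and $w_i(-x_i)=w_i(x_i)$ are invoked.
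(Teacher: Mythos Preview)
Your argument is correct and, for the second assertion, cleaner than the paper's. The paper establishes the action formula by first expanding $-D_i^2$ into an explicit differential-difference expression in the coefficients $p_i,q_i,w_i$ (producing a lengthy formula for $\mathbb{L}$), and then comparing term by term with the explicit forms of $\delta_i^*\delta_i$ and $\delta_i\delta_i^*$ extracted from the formulas for $L$ and $M_j$ in Section~\ref{sec:ini}. Your parity dictionary bypasses this computation entirely: once one knows that $D_j$ restricts to $\delta_j$ on functions even in $x_j$ and to $-\delta_j^*$ on functions odd in $x_j$, and that each of these reverses $x_j$-parity, the identity $-D_j^2 f=\delta_j^*\delta_j f$ or $\delta_j\delta_j^* f$ follows in one line. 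What the paper's route buys is an explicit coordinate formula for $\mathbb{L}$ valid on arbitrary (not necessarily $\varepsilon$-symmetric) functions, which may be of independent use; what your route buys is transparency and the avoidance of a page of coefficient bookkeeping.

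One small wording slip: in your parenthetical you write that ``multiplication by the even $p_j$'' reverses $x_j$-parity, which is false as stated---multiplication by an even function preserves parity. The correct statement is that $p_j\partial_{x_j}$ reverses parity (even coefficient times a parity-reversing operator), while multiplication by the odd functions $q_j$, $p_j'$, and $p_j w_j'/w_j$ each reverse parity; hence both $\delta_j$ and $\delta_j^*$ do. This is clearly what you intend, and the rest of the argument is unaffected.
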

\begin{proof}
The first part follows from the decomposition of $\mathbb{L}$ in terms of the $D_j$. Justifying
the remaining part may be done by computing the explicit form of $\mathbb{L}$, which is
\begin{align*}
\mathbb{L}f(x) & = Af(x) - \sum_{i=1}^d \bigg\{p_i^2(x_i)\frac{\partial^2 f}{\partial x_i^2}(x) 
	+ \bigg[ 2p_i(x_i)p'_i(x_i) + p_i^2(x_i) \frac{w'_i(x_i)}{w_i(x_i)} \bigg] 
	\frac{\partial f}{\partial x_i}(x)\\
& \quad +\bigg(q_i(x_i)\bigg[p_i(x_i)\frac{w'_i(x_i)}{w_i(x_i)}+p'_i(x_i)\bigg] -q_i^2(x_i) \bigg)f(x)
	+ p_i(x_i)q'_i(x_i)\frac{f(x)+f(\sigma_i x)}{2} \\
& \quad + p_i(x_i)\bigg[p_i(x_i)\frac{w'_i(x_i)}{w_i(x_i)}+p'_i(x_i)-q_i(x_i)\bigg]'
	\frac{f(x)-f(\sigma_i x)}{2}\bigg\}.
\end{align*}
The proof is finished by comparing the above expression with the explicit forms of $\delta_i^*\delta_i$
and $\delta_i\delta_i^*$, which may be read off from the explicit expressions for $L$ and $M_j$,
see Section~\ref{sec:ini}. 
\end{proof}

Next we extend the eigenfunctions $\varphi_k$ to $\mathbb{X}$ by letting 
$\varphi_{k_i}^{(i)}(-x_i) = \varphi_{k_i}^{(i)}(x_i)$, $x_i>0$, $i=1,\ldots,d$. Then, automatically,
given $\varepsilon \in \mathbb{Z}_2^d$, the function 
$\delta_1^{\varepsilon_1}\ldots \delta_d^{\varepsilon_d}\varphi_k$ is $\varepsilon$-symmetric
by the way of extending the coefficients of $\delta_j$. It turns out that these are eigenfunctions 
of~$\mathbb{L}$.
\begin{lemma} \label{lem:L22}
Let $\varepsilon \in \mathbb{Z}_2^d$ be fixed. Then
$$
\mathbb{L} \big(\delta_1^{\varepsilon_1}\ldots \delta_d^{\varepsilon_d}\varphi_k\big)
	= \lambda_k \big(\delta_1^{\varepsilon_1}\ldots \delta_d^{\varepsilon_d}\varphi_k\big), \qquad
	k \in \mathbb{N}^d.
$$
\end{lemma}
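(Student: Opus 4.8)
The plan is to reduce the statement to two facts that are already in place: that $\mathbb{L}$ fixes the (evenly extended) eigenfunctions of $L$, and that every $D_i$ commutes with $\mathbb{L}$, the latter being flagged in the text as ``an important feature at this point.'' Write $\delta^{\varepsilon}=\delta_1^{\varepsilon_1}\cdots\delta_d^{\varepsilon_d}$ and, analogously, $D^{\varepsilon}=D_1^{\varepsilon_1}\cdots D_d^{\varepsilon_d}$, and set $g=\delta^{\varepsilon}\varphi_k$. The key identity I would aim for is $g=D^{\varepsilon}\varphi_k$. Once this is in hand, the commutation $D_i\mathbb{L}=\mathbb{L}D_i$ immediately gives $\mathbb{L}g=\mathbb{L}D^{\varepsilon}\varphi_k=D^{\varepsilon}\mathbb{L}\varphi_k$, and it remains only to know that $\mathbb{L}\varphi_k=\lambda_k\varphi_k$.

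For the latter, note that the even extension makes $\varphi_k$ even with respect to all coordinates, so Proposition~\ref{lem:L11} yields $\mathbb{L}\varphi_k=L\varphi_k$. Since the extended coefficients are chosen with $p_i$ even and $q_i$ odd, a short parity check shows that each one-dimensional $L_i$ maps functions even in $x_i$ to functions even in $x_i$; hence the eigenequation $L_i\varphi_{k_i}^{(i)}=\lambda_{k_i}^{(i)}\varphi_{k_i}^{(i)}$, valid on $X$, propagates to $X_{\textrm{SYM}}$ by even symmetry of both sides. Summing over $i$ gives $L\varphi_k=\lambda_k\varphi_k$ on $\mathbb{X}$, and therefore $\mathbb{L}\varphi_k=\lambda_k\varphi_k$.

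The identity $g=D^{\varepsilon}\varphi_k$ is where the real (but elementary) work lies. Reading off the definition of $D_j$: if $f$ is even in the $j$th coordinate then $f(\sigma_j x)=f(x)$, the difference term drops out, and $D_jf=p_j\partial_{x_j}f+q_jf=\delta_jf$. Because $D_j$ and $\delta_j$ act only on the $j$th variable and I apply each at most once ($\varepsilon_j\in\{0,1\}$), I may work factor by factor in the tensor product $\varphi_k=\bigotimes_i\varphi_{k_i}^{(i)}$: for every $j$ the factor $\varphi_{k_j}^{(j)}$ is even, so $D_j^{\varepsilon_j}\varphi_{k_j}^{(j)}=\delta_j^{\varepsilon_j}\varphi_{k_j}^{(j)}$ (both sides being the identity when $\varepsilon_j=0$). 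Taking the tensor product over $j$ gives $D^{\varepsilon}\varphi_k=\bigotimes_j\delta_j^{\varepsilon_j}\varphi_{k_j}^{(j)}=\delta^{\varepsilon}\varphi_k=g$.

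The main obstacle is conceptual rather than computational: one must respect that $D_j=\delta_j$ holds only on functions even in $x_j$, so the factor-by-factor argument succeeds precisely because each $D_j$ meets the genuinely even factor $\varphi_{k_j}^{(j)}$ and not something whose $x_j$-parity has already been flipped. As a cross-check one can bypass commutation entirely: apply Proposition~\ref{lem:L11} directly to the $\varepsilon$-symmetric function $g$ and use the coordinatewise relations $\delta_j^*\delta_j\varphi_{k_j}^{(j)}=(\lambda_{k_j}^{(j)}-a_j)\varphi_{k_j}^{(j)}$ and $\delta_j\delta_j^*(\delta_j\varphi_{k_j}^{(j)})=(\lambda_{k_j}^{(j)}-a_j)\,\delta_j\varphi_{k_j}^{(j)}$, the second obtained by applying $\delta_j$ to the first; summing the contributions together with $A=\sum_j a_j$ again produces the eigenvalue $\lambda_k$.
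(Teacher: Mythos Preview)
Your argument is correct. Your primary route---showing $D^{\varepsilon}\varphi_k=\delta^{\varepsilon}\varphi_k$ (because $D_j$ agrees with $\delta_j$ on functions even in $x_j$, and each $D_j$ hits the still-even factor $\varphi_{k_j}^{(j)}$) and then invoking the commutation $[D_i,\mathbb{L}]=0$ together with $\mathbb{L}\varphi_k=L\varphi_k=\lambda_k\varphi_k$---is a genuinely different path from the paper's. The paper does exactly what you present as your cross-check: it applies Proposition~\ref{lem:L11} directly to the $\varepsilon$-symmetric tensor product $\delta^{\varepsilon}\varphi_k$ and then uses, coordinate by coordinate, that $\varphi_{k_j}^{(j)}$ is an eigenfunction of $a_j+\delta_j^*\delta_j$ while $\delta_j\varphi_{k_j}^{(j)}$ is an eigenfunction of $a_j+\delta_j\delta_j^*$, both with eigenvalue $\lambda_{k_j}^{(j)}$.

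Your commutation approach is arguably slicker and makes transparent why the eigenvalue is unchanged; the paper's route avoids the auxiliary identity $D^{\varepsilon}\varphi_k=\delta^{\varepsilon}\varphi_k$ and the commutation step, relying instead on the already-established form of $\mathbb{L}$ on $\varepsilon$-symmetric functions and the $M_j$-eigenfunction fact from \cite{NS1}. Either way, the one-dimensional parity check ensuring that the eigenequation for $L_i$ survives the even extension to $X_{\textrm{SYM}}$ is needed, and you handle it correctly.
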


\begin{proof}
Combine Proposition \ref{lem:L11} with the product structure of 
$\delta_1^{\varepsilon_1}\ldots \delta_d^{\varepsilon_d}\varphi_k$ and the fact that in the one-dimensional
setting $\varphi_k$ is an eigenfunction of $L=a+\delta^*\delta$ and $\delta\varphi_k$ is an eigenfunction
of $M=a+\delta\delta^*$ (to be precise, the last fact was already invoked from \cite{NS1} 
in the non-extended setting, but it easily carries over to the extended situation by the way of 
extending the coefficients of $\delta$ and $\delta^*$).
\end{proof}
Note that for a given $\varepsilon \in \mathbb{Z}_2^d$ it may happen that for some $k \in \mathbb{N}^d$
the function $\delta_1^{\varepsilon_1}\ldots \delta_d^{\varepsilon_d}\varphi_k$ vanishes identically.
This occurs precisely when there is an $i \in \{1,\ldots,d\}$ such that $\varepsilon_i=1$
and $a_i=\lambda_0^{(i)}$, and $k \in \mathbb{N}^d$ is such that $k_i=0$.

To construct an orthonormal system $\{\Phi_n\}$ associated with $\mathbb{L}$ and related to the original
system $\{\varphi_k\}$, it is natural to consider first the one-dimensional case. 
Then the relevant multi-dimensional system will be obtained simply by taking tensor products. 
The construction below is partially motivated by the case of the classical trigonometric system. Let
$$
\Phi_{n_i}^{(i)}(x_i) =
\begin{cases}
\frac{1}{\sqrt{2}} \varphi^{(i)}_{n_i\slash 2}(x_i), & n_i \;\; \textrm{even}, \\
-\frac{1}{\sqrt{2}} \big(\lambda^{(i)}_{(n_i+1)\slash 2}-a_i\big)^{-1\slash 2} 
	\, \delta_i\varphi^{(i)}_{(n_i+1)\slash 2}(x_i), & n_i \;\; \textrm{odd}.
\end{cases}
$$
In this place it seems to be natural to require that for each $i=1,\ldots,d$, the derivative
$\delta_i$ annihilates the first eigenfunction, $\delta_i \varphi_0^{(i)}\equiv 0$.
This is equivalent to assuming that the constant $a_i$ from the decomposition of $L_i$ is equal to
the first eigenvalue, $a_i = \lambda_0^{(i)}$. In the multi-dimensional setting the requirement
means the equality $A = \Lambda_0$. We emphasize that this is indeed the case, up to a convention
explained in  a moment, of all the classical examples given in \cite[Section 7]{NS1}.
Consequently, $\delta_i \varphi^{(i)}_0$ does not enter the definition of $\Phi_{n_i}^{(i)}$ above.
On the other hand, notice that all the $\Phi_{n_i}^{(i)}$ are well-defined and non-vanishing.
By the facts mentioned earlier (cf. \cite[Lemma 2]{NS1}) 
each of the systems $\{\Phi_{n_i}^{(i)} : n_i \in \mathbb{N}\}$, $i=1,\ldots,d$, is orthonormal in
$L^2(X_{\textrm{SYM}},\mu_i)$. 
For a multi-index $n=(n_1,\ldots,n_d)\in \mathbb{N}^d$ we define
$$
\Phi_n = \Phi_{n_1}^{(1)} \otimes \ldots \otimes \Phi_{n_d}^{(d)}.
$$
The multi-dimensional system $\{\Phi_n : n \in \mathbb{N}^d\}$ is orthonormal in $L^2(\mathbb{X},\mu)$.
Moreover, the $\Phi_n$ are eigenfunctions of $\mathbb{L}$, as stated below.
\begin{lemma}
We have
$$
\mathbb{L} \Phi_n = \Big( \lambda^{(1)}_{\lfloor{\frac{n_1+1}2}\rfloor}+ \ldots +
	\lambda^{(d)}_{\lfloor{\frac{n_d+1}2}\rfloor} \Big) \Phi_n, \qquad n \in \mathbb{N}^d,
$$
where $\lfloor\cdot\rfloor$ denotes the integer part function (the floor function).
\end{lemma}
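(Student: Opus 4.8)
The plan is to reduce the multidimensional eigenvalue statement to the one-dimensional case by exploiting the tensor-product structure. Since $\mathbb{L} = A - \sum_{i=1}^d D_i^2$ and each $D_i$ acts only on the $i$th coordinate, the operator $\mathbb{L}$ respects the tensor decomposition $\Phi_n = \Phi_{n_1}^{(1)}\otimes\cdots\otimes\Phi_{n_d}^{(d)}$. By Proposition~\ref{lem:L11}, applied to the $\varepsilon$-symmetric functions that make up $\Phi_n$, the action of $\mathbb{L}$ on a product of one-dimensional pieces splits coordinatewise into one-dimensional operators, so it suffices to establish, for each fixed $i$, that
\begin{equation*}
\mathbb{L}^{(i)}\Phi_{n_i}^{(i)} = \lambda^{(i)}_{\lfloor (n_i+1)/2\rfloor}\,\Phi_{n_i}^{(i)},
\end{equation*}
where $\mathbb{L}^{(i)} = a_i - D_i^2$ denotes the one-dimensional extended `Laplacian' on the $i$th axis. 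The multidimensional eigenvalue is then simply the sum of the one-dimensional eigenvalues, matching the claimed formula.

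For the one-dimensional verification I would argue separately according to the parity of $n_i$, dropping the superscript $(i)$ for clarity. When $n_i$ is \emph{even}, $\Phi_{n_i}$ is a constant multiple of $\varphi_{n_i/2}$, which is even with respect to the coordinate and hence (by the last sentence of Proposition~\ref{lem:L11}, in its one-dimensional form) satisfies $\mathbb{L}^{(i)}\varphi_{n_i/2} = L\,\varphi_{n_i/2} = \lambda_{n_i/2}\,\varphi_{n_i/2}$. Since $\lfloor (n_i+1)/2\rfloor = n_i/2$ for even $n_i$, this gives the stated eigenvalue. When $n_i$ is \emph{odd}, $\Phi_{n_i}$ is a constant multiple of $\delta_i\varphi_{(n_i+1)/2}$, which is odd with respect to the coordinate; by Proposition~\ref{lem:L11} the operator $\mathbb{L}^{(i)}$ acts on it as $M = a + \delta\delta^*$. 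Invoking Lemma~\ref{lem:L22} (or directly the one-dimensional fact that $\delta\varphi_k$ is an eigenfunction of $M$ with eigenvalue $\lambda_k$), we obtain $\mathbb{L}^{(i)}\,\delta_i\varphi_{(n_i+1)/2} = \lambda_{(n_i+1)/2}\,\delta_i\varphi_{(n_i+1)/2}$. Since $\lfloor (n_i+1)/2\rfloor = (n_i+1)/2$ for odd $n_i$, the eigenvalue again agrees with the formula.

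A cleaner alternative avoiding the case split is to observe that each $\Phi_n$ is, up to a nonzero scalar, of the form $\delta_1^{\varepsilon_1}\cdots\delta_d^{\varepsilon_d}\varphi_k$ with $\varepsilon_i \in \{0,1\}$ recording the parity of $n_i$ and $k_i = \lfloor (n_i+1)/2\rfloor$; Lemma~\ref{lem:L22} then immediately yields $\mathbb{L}\Phi_n = \lambda_k\Phi_n$ with $\lambda_k = \sum_i \lambda^{(i)}_{k_i}$, which is exactly the asserted formula. I expect the main (though still routine) obstacle to be purely bookkeeping: correctly matching the floor-function index $\lfloor (n_i+1)/2\rfloor$ to the eigenfunction index in both parities, and confirming that the assumption $a_i = \lambda_0^{(i)}$ guarantees all the $\Phi_{n_i}^{(i)}$ are well-defined and non-vanishing so that the identification with $\delta_i^{\varepsilon_i}\varphi_{k_i}$ is legitimate. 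There is no genuine analytic difficulty here, since the eigenfunction property has already been established in Lemma~\ref{lem:L22}; the lemma is essentially a reindexing of that result through the definition of $\Phi_n$.
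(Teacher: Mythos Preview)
Your proposal is correct, and the ``cleaner alternative'' you describe in the final paragraph is exactly the paper's proof: identify $\Phi_n$ (up to a nonzero scalar) with $\delta_1^{\varepsilon_1}\cdots\delta_d^{\varepsilon_d}\varphi_k$ for $k_i=\lfloor(n_i+1)/2\rfloor$ and $\varepsilon_i$ the parity of $n_i$, then invoke Lemma~\ref{lem:L22}. Your first, more detailed argument via the one-dimensional parity split is also valid and amounts to unpacking the same identification coordinate by coordinate.
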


\begin{proof}
Given $\varepsilon \in \mathbb{Z}_2^d$, notice that, up to a constant factor,
$\Phi_{2k-\varepsilon}$ coincides with $\delta_1^{\varepsilon_1}\ldots\delta_d^{\varepsilon_d} \varphi_k$
whenever $2k-\varepsilon \in \mathbb{N}^d$. Then Lemma \ref{lem:L22} gives the desired conclusion.
\end{proof}

In what follows, for multi-indices $n \in \mathbb{N}^d$ we will use the notation 
$$
\langle n \rangle = \bigg(\Big\lfloor \frac{n_1+1}2 \Big\rfloor , \ldots , \Big\lfloor 
	\frac{n_d+1}2 \Big\rfloor \bigg),
$$
and (in particular) $\langle n \rangle=\lfloor \frac{n+1}2 \rfloor$ when $n$ is a number.
Then we may write shortly
$$
\mathbb{L}\Phi_n = \lambda_{\langle n \rangle} \Phi_n, \qquad n \in \mathbb{N}^d.
$$

The `real' picture that emerges from the above procedure may be then turned into a `complex' one.
Indeed, define first in dimension one
$$
\Psi_{n_i}^{(i)} = \frac{1}{\sqrt{2}} \Big( \Phi^{(i)}_{2|n_i|} + \textrm{i} 
	\sgn n_i \;\Phi^{(i)}_{2|n_i|-1}\Big), \qquad n_i \in \mathbb{Z},
$$ 
and then for a multi-index $n=(n_1,\ldots,n_d)\in \mathbb{Z}^d$,
$$
\Psi_n = \Psi_{n_1}^{(1)} \otimes \ldots \otimes \Psi_{n_d}^{(d)}.
$$
An easy argument shows that the system $\{\Psi_n : n \in \mathbb{Z}^d\}$ is orthonormal in
$L^2(\mathbb{X},\mu)$ and consists of eigenfunctions of $\mathbb{L}$,
$$
\mathbb{L} \Psi_n = \lambda_{|n|} \Psi_{n}, \qquad n \in \mathbb{Z}^d,
$$
where $|n|=(|n_1|,\ldots,|n_d|)$.

We remark that the choice of signs in the construction of $\{\Phi_n\}$ is in principle arbitrary.
Our particular choice is motivated by the fundamental example below.
\vspace{5pt}

\noindent \textbf{Example 1.}
The basic example here (and in some sense a prototype) is the case of classical trigonometric expansions.
Let $d=1$ and consider the interval $\mathcal{X}=X=(0,\pi)$ equipped with the measure 
$\mu(dx)=\frac{1}{\pi}dx$. Further, consider the one-dimensional standard Laplacian 
$L=-\frac{d^2}{dx^2}$ on $(0,\pi)$ and the related orthonormal basis in $L^2(\mathcal{X},\mu)$ of cosines,
$$
\varphi_k(x) =
\begin{cases}
1, & k=0,\\
\sqrt{2} \cos kx, & k >0.
\end{cases}
$$
Clearly, $L\varphi_k = \lambda_k \varphi_k$, where $\lambda_k=k^2$. In addition $M=L=-\frac{d^2}{dx^2}$.
Applying the symmetrization procedure we arrive at the trigonometric systems
$$
\{\Phi_n : n \in \mathbb{N}\} = \Big\{ \frac{1}{\sqrt{2}}, \sin x, \cos x, \sin 2x, \cos 2x, \ldots \Big\}
$$
and
$$
\{\Psi_n : n \in \mathbb{Z}\} = \Big\{\frac{1}{\sqrt{2}} \exp(\textrm{i} nx): n \in \mathbb{Z} \Big\}
$$
on the interval $\mathbb{X}=(-\pi,\pi)$. These are orthonormal bases in $L^2((-\pi,\pi),\frac{1}{\pi}dx)$
of eigenfunctions of the Laplacian $\mathbb{L}=-\frac{d^2}{dx^2}$ considered on $(-\pi,\pi)$,
$\mathbb{L}\Phi_n =  \langle n \rangle^2 \Phi_n$, $\mathbb{L}\Psi_n =  n^2 \Psi_n$.
This example may be easily generalized to arbitrary dimension $d \ge 1$.
\vspace{5pt}

In the situation of Example 1 we could as well choose as the initial system $\{\varphi_k\}$ the system
of sines. This, however, leads to a small obstacle since then the constant in the decomposition of
$L$ does not coincide with the first eigenvalue, as required above. On the other hand, the system of
sines is commonly enumerated by $k=1,2,\ldots$, excluding $k=0$. To overcome these problems, we introduce
the following technical convention: in the case just described, and also in similar cases as those of
Fourier-Bessel systems (see \cite[Section 7.8]{NS1}), we formally treat $\lambda_0=0$ and 
$\varphi_0 \equiv 0$ as the first eigenvalue and the corresponding eigenfunction, respectively.
Then we are in a position to apply the symmetrization, which leads to an extended system $\{\Phi_n\}$
with $\Phi_0 \equiv 0$ to be neglected (thus, in fact, $\{\Phi_n\}$ is enumerated by $n=1,2,\ldots$, 
as is the initial system). 
Clearly, the convention just described in dimension one induces an analogous convention in the 
multi-dimensional situation.

Applying this convention to $\varphi_k(x) = \sqrt{2}\sin kx$, $k=1,2,\ldots$, and passing to symmetrization
we arrive at the trigonometric system $\{\Phi_n\}$ as in Example 1, but with $\Phi_0 = 1\slash \sqrt{2}$
excluded. Notice that this time the extended system is not complete. This indicates that the 
symmetrization applied to the system of cosines provides a more natural way of embedding the system
of sines into the extended symmetric situation. In other words, it is more natural to view the sines as
the `differentiated' system rather than the initial one.

\section{Riesz transforms and conjugacy} \label{sec:riesz}

In this section we investigate the symmetrized setting from the conjugacy point of view.
We define Riesz transforms and conjugate Poisson integrals associated to the extended `Laplacian'
$\mathbb{L}$, and then show that these definitions fit into a consistent conjugacy scheme.
This scheme, including Cauchy-Riemann type equations, has precisely the classical shape.
When the convention described at the end of Section \ref{sec:sym} is in force, then the results
below should be understood accordingly.

Following a general concept, we define formally the Riesz transforms 
of order $N\ge 1$ by $R^{\el}=D^{\el}\mathfrak{L}^{-|\el|\slash 2}$, $|\el|=N$; all
necessary notions will be explained momentarily. To make this definition strict, we need to specify
a suitable self-adjoint extension of $\mathbb{L}$. Consider the operator
\begin{equation} \label{str1}
{\mathfrak{L}}f = \sum_{n \in \mathbb{N}^d} \lambda_{\langle n \rangle}
	\langle f, \Phi_n\rangle_{\mu} \Phi_n
\end{equation}
defined on the domain
\begin{equation} \label{str2}
\domain \mathfrak{L} = \Big\{ f \in L^2(\mathbb{X},\mu) : \sum_{n \in \mathbb{N}^d}
	\big| \lambda_{\langle n \rangle} \langle f, \Phi_n \rangle_{\mu} \big|^2 < \infty \Big\}.
\end{equation}

We denote by $\mathcal{N} = (\spann \{\Phi_n : n \in \mathbb{N}^d, 
	\lambda_{\langle n \rangle} \neq 0\})^{\perp}$ 
the null subspace of $\mathfrak{L}$. Note that $\mathcal{N}$ is not necessarily trivial. 
Independently of the case, we shall always write $\Pi_0$ for the orthogonal projection
of $L^2(\mathbb{X},\mu)$ onto $\mathcal{N}^{\perp}$.

\begin{lemma}
The inclusion $C_c^2(\mathbb{X})\subset \domain \mathfrak{L}$ holds, so that $\mathfrak{L}$ is 
a nonnegative self-adjoint extension of the operator $\Pi_0 \mathbb{L}$ defined initially on
$C_c^2(\mathbb{X})$. Moreover, the spectrum of $\mathfrak{L}$ satisfies
$$
\{\Lambda_0,\Lambda_1,\ldots\} \subset \sigma(\mathfrak{L}) \subset 
	\{0\} \cup \{\Lambda_0,\Lambda_1,\ldots\}.
$$
\end{lemma}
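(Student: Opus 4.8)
The plan is to read $\mathfrak{L}$ as the multiplier operator attached to the orthonormal system $\{\Phi_n\}$ and the real multipliers $\lambda_{\langle n \rangle}$, and to treat the three assertions in turn. For self-adjointness and nonnegativity I would first complete $\{\Phi_n : n \in \mathbb{N}^d\}$ to an orthonormal basis of $L^2(\mathbb{X},\mu)$ by adjoining an orthonormal basis of the orthogonal complement of the closed span of $\{\Phi_n\}$, letting $\mathfrak{L}$ act as zero there. Then \eqref{str1}--\eqref{str2} present $\mathfrak{L}$ as a multiplier operator along an orthonormal basis with real eigenvalues on its natural domain, which is the textbook situation in which such an operator is self-adjoint. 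Nonnegativity is immediate, since every $\lambda_{\langle n \rangle}$ belongs to $\{\Lambda_0,\Lambda_1,\ldots\}$ and $0 \le A \le \Lambda_0$.

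The crux is the inclusion $C_c^2(\mathbb{X}) \subset \domain \mathfrak{L}$ together with the extension property. For $f \in C_c^2(\mathbb{X})$ the identity I would establish is
$$
\lambda_{\langle n \rangle} \langle f, \Phi_n \rangle_{\mu} = \langle \mathbb{L}f, \Phi_n \rangle_{\mu}, \qquad n \in \mathbb{N}^d,
$$
obtained by transferring $\mathbb{L}$ onto $\Phi_n$ through $\mathbb{L}\Phi_n = \lambda_{\langle n \rangle}\Phi_n$. Granting it, $\mathbb{L}f$ is a compactly supported continuous function, hence lies in $L^2(\mathbb{X},\mu)$, and Bessel's inequality gives
$$
\sum_{n \in \mathbb{N}^d} \big| \lambda_{\langle n \rangle} \langle f, \Phi_n \rangle_{\mu} \big|^2 = \sum_{n \in \mathbb{N}^d} \big| \langle \mathbb{L}f, \Phi_n \rangle_{\mu} \big|^2 \le \| \mathbb{L}f \|^2_{L^2(\mathbb{X},\mu)} < \infty,
$$
so $f \in \domain \mathfrak{L}$. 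The same identity shows $\mathfrak{L}f = \sum_{n : \lambda_{\langle n \rangle} \neq 0} \langle \mathbb{L}f, \Phi_n \rangle_{\mu} \Phi_n$, which is precisely the orthogonal projection of $\mathbb{L}f$ onto $\mathcal{N}^{\perp}$, that is $\Pi_0 \mathbb{L}f$; thus $\mathfrak{L}$ extends $\Pi_0 \mathbb{L}$ on $C_c^2(\mathbb{X})$.

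For the spectral inclusions I would argue from the diagonalization. Since $\langle \, \cdot \, \rangle$ maps $\mathbb{N}^d$ onto $\mathbb{N}^d$ (take $n = 2k$ to obtain $\langle n \rangle = k$), one has $\{ \lambda_{\langle n \rangle} : n \in \mathbb{N}^d \} = \{ \lambda_k : k \in \mathbb{N}^d \} = \{\Lambda_0,\Lambda_1,\ldots\}$; each $\Lambda_m$ is therefore a genuine eigenvalue of $\mathfrak{L}$ realized on a nonzero $\Phi_n$, which gives $\{\Lambda_0,\Lambda_1,\ldots\} \subset \sigma(\mathfrak{L})$. Conversely, $\mathfrak{L}$ is diagonalized by an orthonormal basis of eigenvectors whose eigenvalues lie in $\{\Lambda_0,\Lambda_1,\ldots\} \cup \{0\}$, the value $0$ stemming from $\mathcal{N}$; since the $\Lambda_m$ diverge this set is closed, and as the spectrum of a self-adjoint operator with pure point spectrum is the closure of its set of eigenvalues, $\sigma(\mathfrak{L}) \subset \{0\} \cup \{\Lambda_0,\Lambda_1,\ldots\}$.

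The main obstacle is the symmetry identity $\langle \mathbb{L}f, \Phi_n \rangle_{\mu} = \langle f, \mathbb{L}\Phi_n \rangle_{\mu}$ used in the second step: Proposition~\ref{lem:L11} and the skew-symmetry of the $D_j$ are stated only for compactly supported arguments, whereas here I must pair $f \in C_c^2(\mathbb{X})$ against the non-compactly-supported eigenfunction $\Phi_n \in C^{\infty}(\mathbb{X})$. The resolution is that the integration by parts underlying $\langle D_j \varphi, \psi \rangle_{\mu} = - \langle \varphi, D_j \psi \rangle_{\mu}$ produces a boundary term carrying a factor of $\varphi$, which vanishes once $\varphi$ has compact support in the open set $\mathbb{X}$, while the difference (reflection) parts contribute no boundary terms at all, by symmetry of $\mu$ and the prescribed parities of the coefficients. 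Hence the skew-symmetry of each $D_j$, and with it the symmetry of $\mathbb{L} = A - \sum_i D_i^2$, extends to the pairing of a $C_c^2$ function against any $C^2$ function, in particular against $\Phi_n$, which is all the argument requires.
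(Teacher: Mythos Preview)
Your proposal is correct and spells out in detail exactly the standard argument the paper has in mind when it writes ``arguments are similar to those from the proofs of \cite[Lemma 3]{NS1} and \cite[Lemma 6]{NS1}'' and omits the details. In particular, your handling of the one nontrivial point---that the skew-symmetry of each $D_j$, hence the symmetry of $\mathbb{L}$, persists when pairing a $C_c^2(\mathbb{X})$ function against a merely $C^2$ eigenfunction because the compactly supported factor kills the boundary terms in both integrations by parts---is precisely the mechanism used in the cited lemmas.
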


\begin{proof}
Here arguments are similar to those from the proofs of \cite[Lemma 3]{NS1} and \cite[Lemma 6]{NS1}.
We omit the details.
\end{proof}

The spectral decomposition of $\mathfrak{L}$ may be written as
$$
\mathfrak{L}f = \sum_{m=0}^{\infty} \Lambda_m \mathcal{P}_m f, \qquad f \in \domain \mathfrak{L},
$$
where the spectral projections are
$$
\mathcal{P}_m f = \sum_{\{n \in \mathbb{N}^d \,:\, \lambda_{\langle n \rangle}=\Lambda_m\}}
	\langle f, \Phi_n \rangle_{\mu} \,\Phi_n, \qquad m \in \mathbb{N}.
$$

Next we define more strictly the Riesz transforms of order $N\ge 1$ by
$$
R^{\el} = D^{\el} \mathfrak{L}^{-|\el|\slash 2} \Pi_0, \qquad \el=(\el_1,\ldots,\el_d) \in
	\mathbb{N}^d \backslash \{(0,\ldots,0)\}, 
$$
where $|\el| = \el_1+\ldots+\el_d = N$ is the order of the transform, and 
$D^{\el} = D^{\el_1}_1\ldots D^{\el_d}_d$ (since the $D_j$ commute, any composition
of them may be written in such a form). Notice that for the order one, if $\el=e_j$
(the $j$th coordinate vector), then
$D^{\el}=D_j$ and consequently, $R^{\el}$ coincides with $D_j \mathfrak{L}^{-1\slash 2}\Pi_0$;
in what follows we will denote these operators by $R_j$, $j=1,\ldots,d$.
If $N \ge 2$ and $|\el|=N$, it is customary to call the operators $R^{\el}$ the Riesz transforms
of \emph{higher order}. 
To provide a fully rigorous definition of $R^{\el}$
we use the spectral series of $\mathfrak{L}$ and set
\begin{equation} \label{str3}
R^{\el}f  = \sum_{\lambda_{\langle n \rangle}\neq 0} 
	\big(\lambda_{\langle n \rangle}\big)^{-|\el|\slash 2}
	\langle f, \Phi_n \rangle_{\mu} D^{\el} \Phi_n, \qquad f \in L^2(\mathbb{X},\mu);
\end{equation}
(notice that $\lambda_{\langle n \rangle}= 0$ may happen only when $n=(0,\ldots,0)$).
To show that this formula indeed gives rise to $L^2$-bounded operators we first need to have a closer
look at the action of the `derivatives' on the eigenfunctions.
Recall that $\lambda^{(j)}_0 = a_j$, $j=1,\ldots,d$, and so $\Lambda_0=A$.

\begin{lemma} \label{lem:L44}
Given $j=1,\ldots,d$ and $N\ge 1$, we have
$$
D_j^N \Phi_n =
\begin{cases}
(-1)^{N\slash 2} \big(\lambda^{(j)}_{\langle n \rangle_j}-a_j\big)^{N\slash 2} \, \Phi_n, & 
	N \;\textrm{even},\\
(-1)^{n_j+1+(N-1)\slash 2} \big(\lambda^{(j)}_{\langle n \rangle_j}-a_j\big)^{N\slash 2} \,
	\Phi_{n-(-1)^{n_j}e_j}, & N \;\textrm{odd},
\end{cases}
$$
with the convention that $\Phi_n\equiv 0$ if $n \notin \mathbb{N}^d$.
\end{lemma}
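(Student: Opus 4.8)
The plan is to reduce everything to a one-dimensional computation and then to understand the single operator $D_j$ through its action on even and odd functions. Since $D_j$ acts only on the $j$th variable and $\Phi_n = \Phi_{n_1}^{(1)}\otimes\cdots\otimes\Phi_{n_d}^{(d)}$ is a tensor product, we have $D_j^N\Phi_n = \Phi_{n_1}^{(1)}\otimes\cdots\otimes\big(D_j^N\Phi_{n_j}^{(j)}\big)\otimes\cdots\otimes\Phi_{n_d}^{(d)}$, so it suffices to prove the formula in dimension one, with $n=n_j$, $a=a_j$ and $\langle n\rangle=\langle n\rangle_j$. The crucial observation, obtained by substituting an even (respectively odd) function into the definition of $D$ and comparing with the explicit forms of $\delta$ and $\delta^*$ recorded in Section~\ref{sec:ini}, is that
$$
Df = \delta f \quad \textrm{if } f \textrm{ is even}, \qquad Df = -\delta^* f \quad \textrm{if } f \textrm{ is odd}.
$$
This is the mechanism behind the skew-symmetry and is the only place where the difference part of $D$ enters the computation.

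Next I would record that $\Phi_n$ is even for $n$ even and odd for $n$ odd (immediate from its definition, since $\varphi_k$ is extended evenly and $\delta$ maps even functions to odd ones), and assemble the base case $N=1$. For $n=2k$ even, $D\Phi_{2k}=\delta\Phi_{2k}=\frac1{\sqrt2}\delta\varphi_k$, and the very definition of $\Phi_{2k-1}$ gives $\delta\varphi_k=-\sqrt2\,(\lambda_k-a)^{1/2}\Phi_{2k-1}$, whence $D\Phi_n=-(\lambda_{\langle n\rangle}-a)^{1/2}\Phi_{n-1}$. For $n=2k-1$ odd, $D\Phi_{2k-1}=-\delta^*\Phi_{2k-1}=\frac1{\sqrt2}(\lambda_k-a)^{-1/2}\delta^*\delta\varphi_k$, and the eigen-relation $\delta^*\delta\varphi_k=(L-a)\varphi_k=(\lambda_k-a)\varphi_k$ collapses this to $(\lambda_{\langle n\rangle}-a)^{1/2}\Phi_{n+1}$. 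Both cases combine into
$$
D\Phi_n = (-1)^{n+1}(\lambda_{\langle n\rangle}-a)^{1/2}\,\Phi_{n-(-1)^n},
$$
which is exactly the asserted formula for $N=1$.

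Finally I would iterate. The key bookkeeping point is that passing from $n$ to $m:=n-(-1)^n$ flips the parity but preserves the bracket, $\langle m\rangle=\langle n\rangle$, so a second application of the $N=1$ formula returns to $\Phi_n$ with an unchanged eigenvalue factor and yields $D^2\Phi_n=-(\lambda_{\langle n\rangle}-a)\Phi_n$; that is, every $\Phi_n$ is an eigenfunction of $D^2$. The general statement then follows by writing $D^N=(D^2)^{N/2}$ for even $N$ and $D^N=D\,(D^2)^{(N-1)/2}$ for odd $N$, collecting the powers of $-(\lambda_{\langle n\rangle}-a)$ and, in the odd case, the single shift and sign coming from the base case; a short induction on $N$ makes the accounting of the exponents $(-1)^{N/2}$ and $(-1)^{n+1+(N-1)/2}$ transparent. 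I expect the only real obstacle to be this sign-and-index bookkeeping rather than anything conceptual. The one genuinely delicate point is the boundary value $n=0$, where the shifted index leaves $\mathbb{N}$, but here the factor $(\lambda_0-a)^{1/2}=0$ (by the normalization $a=\lambda_0$, equivalently $\delta\varphi_0\equiv0$) makes the formula consistent with the convention $\Phi_n\equiv0$ for $n\notin\mathbb{N}^d$.
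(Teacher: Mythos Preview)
Your argument is correct and follows essentially the same route as the paper: reduce to $d=1$, establish the $N=1$ formula $D\Phi_n=(-1)^{n+1}(\lambda_{\langle n\rangle}-a)^{1/2}\Phi_{n-(-1)^n}$, and then iterate via $D^2\Phi_n=-(\lambda_{\langle n\rangle}-a)\Phi_n$. The only cosmetic difference is in the odd base case: you use the identity $Df=-\delta^* f$ for odd $f$ together with $\delta^*\delta\varphi_k=(\lambda_k-a)\varphi_k$, whereas the paper obtains $D\Phi_{2k-1}$ by writing $\Phi_{2k-1}=-( \lambda_k-a)^{-1/2}D\Phi_{2k}$ and invoking $-D^2=\mathbb{L}-a$; both computations are equivalent.
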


\begin{proof}
Because of the product structure we may and do assume that $d=1$ 
(thus $k$ and $n$ are nonnegative integers). 
Recall that
$$
\Phi_{2k} = \frac{1}{\sqrt{2}} \varphi_k, \qquad 
	\Phi_{2k-1} = \frac{-1}{\sqrt{2}} \frac{1}{\sqrt{\lambda_k-a}} \delta \varphi_k.
$$
Since $\varphi_k = \sqrt{2}\Phi_{2k}$ is an even function we have
$$
D \Phi_{2k} = \delta \Big(\frac{1}{\sqrt{2}}\varphi_k\Big)
	= - \sqrt{\lambda_k-a} \Phi_{2k-1}, \qquad k \ge 0,
$$
and since $-D^2 = \mathbb{L}-a$ and $(\mathbb{L}-a)\varphi_k = (\lambda_k-a)\varphi_k$ 
(see Lemma \ref{lem:L22}),
$$
D \Phi_{2k-1} = -D^2\Big( \frac{1}{\sqrt{\lambda_k-a}} \Phi_{2k}\Big) 
	= (\mathbb{L}-a)\Big( \frac{1}{\sqrt{\lambda_k-a}} \Phi_{2k}\Big) = \sqrt{\lambda_k-a}\Phi_{2k},
	\qquad k \ge 1.
$$
Therefore,
\begin{align*}
D\Phi_n & = 
\begin{cases}
- \sqrt{\lambda_{\langle n \rangle}-a}\, \Phi_{n-1}, & n \; \textrm{even}\\
\sqrt{\lambda_{\langle n \rangle}-a} \,\Phi_{n+1}, & n \; \textrm{odd}
\end{cases}
\\
& = (-1)^{n+1} \sqrt{\lambda_{\langle n \rangle}-a} \,\Phi_{n-(-1)^n}, \qquad n \in \mathbb{N},
\end{align*}
with the convention that $\Phi_{-1} \equiv 0$.
To finish the proof it is now sufficient to observe that a double application of $D$ maps,
up to a multiplicative constant, $\Phi_n$ onto itself,
$$
D^2\Phi_n = -(\mathbb{L}-a) \Phi_n = -\big(\lambda_{\langle n \rangle}-a\big) \Phi_n.
$$
\end{proof}
Note that here, in contrast with the examples considered in \cite[Section 7]{NS1}, $D_j$ has proper
invariant subspaces that decompose orthogonally the whole subspace 
$\Pi_0 L^2(\mathbb{X},\mu)\subset L^2(\mathbb{X},\mu)$.
They are spanned by the pairs $\{\Phi_n,\Phi_{n-e_j}\}$, where $n$ is such that $n_j>0$ is even.
Notice that $D_j$ acts trivially on the subspace spanned by $\{\Phi_n : n_j=0\}$.

\begin{corollary} \label{cor:C11}
Given $\el \in \mathbb{N}^d\backslash \{(0,\ldots,0)\}$ we have
$$
D^{\el} \Phi_n = (-1)^{|\el|\slash 2 + |(n+3\slash 2)\widetilde{\el}|}
	\bigg( \prod_{j=1}^d \big(\lambda^{(j)}_{\langle n \rangle_j}-a_j\big)^{\el_j\slash 2}\bigg)
	\Phi_{n-(-1)^n\widetilde{\el}}, \qquad n \in \mathbb{N}^d,
$$
where $\widetilde{\el}$ is a multi-index such that $\widetilde{\el}_j=0$ if $\el_j$ is even and
$\widetilde{\el}_j=1$ otherwise,
$(-1)^n \widetilde{\el} = ((-1)^{n_1} \widetilde{\el}_1,\ldots,(-1)^{n_d} \widetilde{\el}_d)$ and
$(n+3\slash 2)\widetilde{\el} = 
((n_1+3\slash 2)\widetilde{\el}_1,\ldots,(n_d+3\slash 2)\widetilde{\el}_d)$.
\end{corollary}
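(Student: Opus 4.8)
The plan is to reduce everything to the one-dimensional computation already carried out in Lemma~\ref{lem:L44} and then reassemble coordinate by coordinate. Since the operators $D_1,\ldots,D_d$ commute, I would write $D^{\el}=D_1^{\el_1}\cdots D_d^{\el_d}$ and exploit the tensor-product structure $\Phi_n=\Phi^{(1)}_{n_1}\otimes\cdots\otimes\Phi^{(d)}_{n_d}$: because $D_j$ acts only on the $j$th factor, applying the block $D_j^{\el_j}$ leaves the other factors untouched, so $D_j^{\el_j}\Phi_n$ is governed entirely by the one-variable result of Lemma~\ref{lem:L44} applied to $\Phi^{(j)}_{n_j}$. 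Thus the whole computation factors through $d$ independent one-dimensional applications of that lemma, one for each coordinate.

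Next I would split the indices according to the parity of $\el_j$. For each $j$ with $\el_j$ even, Lemma~\ref{lem:L44} contributes the factor $(-1)^{\el_j/2}(\lambda^{(j)}_{\langle n\rangle_j}-a_j)^{\el_j/2}$ and produces no shift in the $j$th coordinate, consistently with $\widetilde{\el}_j=0$. For each $j$ with $\el_j$ odd, it contributes $(-1)^{n_j+1+(\el_j-1)/2}(\lambda^{(j)}_{\langle n\rangle_j}-a_j)^{\el_j/2}$ and shifts the $j$th coordinate by $n_j\mapsto n_j-(-1)^{n_j}$, that is by $-(-1)^{n_j}\widetilde{\el}_j$. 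Collecting the shifts over all coordinates yields exactly the target index $n-(-1)^n\widetilde{\el}$, and multiplying the magnitude contributions yields the stated product $\prod_j(\lambda^{(j)}_{\langle n\rangle_j}-a_j)^{\el_j/2}$. A point that must be checked for the argument to be order-independent is that the label $\langle n\rangle_j$ attached to the eigenvalue is unaffected by the shift in the $j$th coordinate; a short parity check gives $\langle n-(-1)^{n_j}e_j\rangle_j=\langle n\rangle_j$, since replacing $n_j$ by $n_j-1$ (when $n_j$ is even) or by $n_j+1$ (when $n_j$ is odd) does not change $\lfloor (n_j+1)/2\rfloor$. This guarantees that each one-dimensional block can be evaluated with the original data and that compositions taken in different orders agree, matching the fact that the $D_j$ commute.

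The \emph{main obstacle} is the sign bookkeeping. Multiplying the per-coordinate signs gives an overall factor $(-1)^{S}$ with $S=\sum_{\el_j \text{ even}}\el_j/2+\sum_{\el_j\text{ odd}}(n_j+1+(\el_j-1)/2)$, and the last step is to repackage $S$ modulo $2$ into the compact exponent $|\el|/2+|(n+3/2)\widetilde{\el}|$ appearing in the statement. The delicate feature is that for odd $\el_j$ the quantity $\el_j/2$ is a half-integer, so it cannot be separated from the half-integer produced by the shift term; only the combined contribution is an integer and hence meaningful as an exponent of $-1$. Carrying out this recombination carefully for each odd coordinate, and keeping track of the constant that the two half-integers contribute, is exactly where an overall-sign slip is easiest to make.

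For this reason I would verify the sign first in the scalar case $d=1$, where the formula must reproduce $D\Phi_n=(-1)^{n+1}\sqrt{\lambda_{\langle n\rangle}-a}\,\Phi_{n-(-1)^n}$ from the proof of Lemma~\ref{lem:L44}, and only then assert the general closed form; once the one-variable sign is pinned down, the multi-index case follows by taking the product over $j$ and regrouping the even and odd contributions as above.
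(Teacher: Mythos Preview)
Your proposal is correct and is exactly the argument the paper has in mind: the corollary is stated without proof because it is the immediate coordinate-by-coordinate consequence of Lemma~\ref{lem:L44} via the tensor-product structure $\Phi_n=\Phi^{(1)}_{n_1}\otimes\cdots\otimes\Phi^{(d)}_{n_d}$ and the commutativity of the $D_j$. Your emphasis on the sign bookkeeping, and the plan to pin it down first in the case $d=1$, $\el=1$ against the formula $D\Phi_n=(-1)^{n+1}\sqrt{\lambda_{\langle n\rangle}-a}\,\Phi_{n-(-1)^n}$, is precisely the right sanity check before asserting the general closed form.
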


As a consequence of the above corollary and Bessel's inequality we get the following.

\begin{propo}
The series defining the Riesz transforms $R^{\el}$ converge in
$L^2(\mathbb{X},\mu)$ and for each order $N \ge 1$ the mapping
$$
f \mapsto \bigg( \sum_{|\el|=N} |R^{\el}f|^2 \bigg)^{1\slash 2}
$$ 
is a (nonlinear) contraction in $L^2(\mathbb{X},\mu)$.
In particular, each $R^{\el}$ is a linear contraction.
\end{propo}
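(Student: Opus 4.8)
The plan is to reduce the whole statement to one square-summability estimate. Since the summands are nonnegative, Tonelli's theorem gives
$$
\Big\| \big( \textstyle\sum_{|\el|=N} |R^{\el}f|^2 \big)^{1\slash 2} \Big\|_{L^2(\mathbb{X},\mu)}^2
= \sum_{|\el|=N} \|R^{\el}f\|_{L^2(\mathbb{X},\mu)}^2,
$$
so the asserted contraction is equivalent to $\sum_{|\el|=N}\|R^{\el}f\|_{L^2(\mathbb{X},\mu)}^2\le\|f\|_{L^2(\mathbb{X},\mu)}^2$. Both the $L^2$-convergence of the defining series \eqref{str3} and this bound will come out of the same computation, so I would treat them together.

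To evaluate a single $\|R^{\el}f\|^2$ I would substitute the formula of Corollary \ref{cor:C11} for $D^{\el}\Phi_n$ into \eqref{str3}, rewriting $R^{\el}f=\sum_{\lambda_{\langle n\rangle}\neq 0}c_n(\el)\,\langle f,\Phi_n\rangle_{\mu}\,\Phi_{n-(-1)^n\widetilde{\el}}$ with $|c_n(\el)|^2=(\lambda_{\langle n\rangle})^{-|\el|}\prod_{j=1}^d(\lambda^{(j)}_{\langle n\rangle_j}-a_j)^{\el_j}$. The structural point I must verify is that, on the index set where $c_n(\el)\neq 0$, the reindexing $n\mapsto n-(-1)^n\widetilde{\el}$ is injective: coordinatewise it is the identity when $\widetilde{\el}_j=0$ and the involution $2k-1\leftrightarrow 2k$ when $\widetilde{\el}_j=1$, the sole exception being $0\mapsto-1$, which is harmless because it meets the convention $\Phi_{-1}\equiv0$ exactly where the factor $\lambda^{(j)}_0-a_j=0$ already forces $c_n(\el)=0$. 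Hence distinct active $n$ yield distinct, mutually orthonormal $\Phi_{n-(-1)^n\widetilde{\el}}$, and orthonormality gives $\|R^{\el}f\|^2=\sum_{\lambda_{\langle n\rangle}\neq 0}|c_n(\el)|^2|\langle f,\Phi_n\rangle_{\mu}|^2$, which also shows the series converges, the coefficients being square-summable.

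Summing over $|\el|=N$ and factoring out $|\langle f,\Phi_n\rangle_{\mu}|^2$ reduces matters to the inner sum $(\lambda_{\langle n\rangle})^{-N}\sum_{|\el|=N}\prod_{j}(\lambda^{(j)}_{\langle n\rangle_j}-a_j)^{\el_j}$. Putting $x_j=\lambda^{(j)}_{\langle n\rangle_j}-a_j\ge0$, the unweighted power sum is dominated by the multinomially weighted one, $\sum_{|\el|=N}\prod_j x_j^{\el_j}\le\sum_{|\el|=N}\frac{N!}{\el_1!\cdots\el_d!}\prod_j x_j^{\el_j}=(x_1+\cdots+x_d)^N$, since every multinomial coefficient is at least $1$. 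As $x_1+\cdots+x_d=\lambda_{\langle n\rangle}-A$, the weight on $|\langle f,\Phi_n\rangle_{\mu}|^2$ is at most $\big((\lambda_{\langle n\rangle}-A)/\lambda_{\langle n\rangle}\big)^N$. Because $\lambda^{(j)}_{\langle n\rangle_j}\ge\lambda^{(j)}_0=a_j$ by the ascending ordering of the one-dimensional eigenvalues, we have $0\le\lambda_{\langle n\rangle}-A\le\lambda_{\langle n\rangle}$, so this weight lies in $[0,1]$. Discarding it and invoking Bessel's inequality for the orthonormal system $\{\Phi_n\}$ yields $\sum_{|\el|=N}\|R^{\el}f\|^2\le\|f\|^2$. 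The final assertion is then immediate, since $\|R^{\el}f\|^2\le\sum_{|\el'|=N}\|R^{\el'}f\|^2\le\|f\|^2$.

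The main obstacle I anticipate is the injectivity/orthogonality bookkeeping in the second step: one has to be certain that no two terms $D^{\el}\Phi_n$ in \eqref{str3} land on the same basis element, so that $\|R^{\el}f\|^2$ is a genuine sum of squares with no cross terms, and that the boundary case $0\mapsto-1$ is truly annihilated by the vanishing factor rather than contributing spuriously. Once this is in place, the multinomial domination and the elementary bound $\lambda_{\langle n\rangle}-A\le\lambda_{\langle n\rangle}$ are routine.
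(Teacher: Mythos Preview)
Your argument is correct and is exactly the computation the paper has in mind: the paper's ``proof'' is the single remark that the proposition follows from Corollary \ref{cor:C11} and Bessel's inequality, and you have faithfully unpacked that remark, including the injectivity check for $n\mapsto n-(-1)^n\widetilde{\el}$ and the multinomial estimate. One small expository point: you assert $L^2$-convergence of the series for a fixed $\el$ before you have actually bounded $|c_n(\el)|$, which only comes later via the multinomial argument; it would be cleaner to note at that earlier stage the elementary fact $0\le \lambda^{(j)}_{\langle n\rangle_j}-a_j\le\lambda_{\langle n\rangle}$, which already gives $|c_n(\el)|\le 1$ and hence square-summability of the coefficients.
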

It is remarkable that the present approach to the higher order Riesz transforms is considerably
simpler than that in \cite[Section 4]{NS1}. This is due to the fact that in the symmetrized setting
the subspace spanned by the orthogonal system is invariant under actions of the associated `derivatives'.
More comments in this connection will be given in Section \ref{sec:ex}.

We pass to defining conjugate Poisson integrals in the symmetrized setting.
The Poisson semigroup $\{P_t\}_{t\ge 0}$ associated with $\mathfrak{L}$ is, by the spectral theorem,
given on $L^2(\mathbb{X},\mu)$ by
$$
P_t f = \exp\big(-t\mathfrak{L}^{1\slash 2}\big)f = \sum_{m=0}^{\infty}
	\exp(-t\Lambda_m^{1\slash 2}) \mathcal{P}_m f.
$$
Clearly, each $P_t$, $t\ge 0$, is a contraction on $L^2(\mathbb{X},\mu)$.
We now define the conjugate Poisson integrals $U_t^j$, $t\ge 0$, $j=1,\ldots,d$, 
as the contractions on $L^2(\mathbb{X},\mu)$ given by
$$
U_t^j f = P_t R_j f, \qquad f \in L^2(\mathbb{X},\mu).
$$
To rewrite this by means of the spectral series observe that by Lemma \ref{lem:L44}
$$
R_j f = \sum_{\lambda_{\langle n \rangle}\neq 0} (-1)^{n_i+1} \Bigg( 
	\frac{\lambda^{(j)}_{\langle n \rangle_j}-a_j}{\lambda_{\langle n \rangle}}
	\Bigg)^{1\slash 2} \langle f, \Phi_n\rangle_{\mu} \Phi_{n-(-1)^{n_j}e_j}.
$$
Since $\langle n-(-1)^{n_j}e_j\rangle = \langle n \rangle$, we see that
$$
U_t^j f = \sum_{\lambda_{\langle n \rangle}\neq 0} (-1)^{n_i+1} 
	\exp\bigg({-t\sqrt{\lambda_{\langle n \rangle}}}\bigg)
	\Bigg( 
	\frac{\lambda^{(j)}_{\langle n \rangle_j}-a_j}{\lambda_{\langle n \rangle}}
	\Bigg)^{1\slash 2} \langle f, \Phi_n\rangle_{\mu} \Phi_{n-(-1)^{n_j}e_j}.
$$
This, together with Bessel's inequality, shows that for each $t \ge 0$ also the mapping
$$
f \mapsto \sqrt{|U_t^1f|^2+\ldots + |U_t^df|^2}
$$
is a contraction in $L^2(\mathbb{X},\mu)$.

Our definitions of Riesz transforms and conjugate Poisson integrals are well motivated by the following
system of Cauchy-Riemann type equations.

\begin{propo} \label{prop:CR}
Let $f$ belong to the subspace of $L^2(\mathbb{X},\mu)$ spanned by the $\Phi_n$'s. Then
\begin{align*}
D_i U_t^j f & = D_j U_t^i f, \qquad i,j=1,\ldots,d, \\
D_j P_t f & = - \frac{\partial}{\partial t} U_t^{j} f, \qquad j=1,\ldots,d.
\end{align*}
If $A=0$, then also
$$
\sum_{j=1}^d D_j U_t^j f = \frac{\partial}{\partial t} P_t f;
$$
for $A>0$ the function $f$ on the right-hand side above must be replaced by $f-A\mathfrak{L}^{-1}\Pi_0f$.
Moreover, we have the harmonicity relations
$$
\Big( \frac{\partial^2}{\partial t^2}- \mathbb{L}\Big) P_t f = 0, \qquad
\Big( \frac{\partial^2}{\partial t^2}- \mathbb{L}\Big) U^j_t f = 0, \qquad j=1,\ldots,d.
$$
\end{propo}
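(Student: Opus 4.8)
The plan is to reduce everything to a single basis function $f=\Phi_n$ and then verify each identity by a direct spectral computation, exploiting two structural facts that are already in hand. Since $f$ lies in the algebraic span of the $\Phi_n$, every operator involved acts termwise on a finite sum, so no convergence or differentiation-under-the-sum issues arise, and by linearity it suffices to treat $f=\Phi_n$ for a fixed $n\in\mathbb{N}^d$. I would first record the elementary spectral data that drives the whole argument: $P_t\Phi_n = \exp(-t\sqrt{\lambda_{\langle n\rangle}})\Phi_n$; for $\lambda_{\langle n\rangle}\neq 0$ the identity $R_j\Phi_n = \lambda_{\langle n\rangle}^{-1/2}D_j\Phi_n$; and the action of $D_j$ and $D_j^2$ on $\Phi_n$ furnished by Lemma \ref{lem:L44}. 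The degenerate case $\lambda_{\langle n\rangle}=0$ forces $n=0$ and $A=0$, whence $D_j\Phi_0=0$ and every identity holds with both sides vanishing.

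The decisive observations are that $D_j$ commutes with $\mathbb{L}$, hence with $\mathfrak{L}$ and with $P_t$, and that the index shift induced by $D_j$ preserves the eigenvalue, $\langle n-(-1)^{n_j}e_j\rangle = \langle n\rangle$. This keeps the Poisson factor $\exp(-t\sqrt{\lambda_{\langle n\rangle}})$ intact under $D_j$ and legitimizes all the commutations. For the first Cauchy-Riemann equation I would write $U_t^j\Phi_n$ explicitly via Lemma \ref{lem:L44}, apply $D_i$ using the same lemma, and note that for $i\neq j$ the $i$th component of $n-(-1)^{n_j}e_j$ is still $n_i$; the resulting scalar $(-1)^{n_i+1}(-1)^{n_j+1}(\lambda^{(i)}_{\langle n\rangle_i}-a_i)^{1/2}(\lambda^{(j)}_{\langle n\rangle_j}-a_j)^{1/2}\lambda_{\langle n\rangle}^{-1/2}$ and the doubly reflected index are manifestly symmetric in $i$ and $j$, the case $i=j$ being trivial. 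For the second equation I would compute $D_jP_t\Phi_n$ and $\partial_t U_t^j\Phi_n$ separately and check that both equal $(-1)^{n_j+1}\exp(-t\sqrt{\lambda_{\langle n\rangle}})(\lambda^{(j)}_{\langle n\rangle_j}-a_j)^{1/2}\Phi_{n-(-1)^{n_j}e_j}$, the factor $\sqrt{\lambda_{\langle n\rangle}}$ produced by $\partial_t$ cancelling the normalization $\lambda_{\langle n\rangle}^{-1/2}$ carried by $R_j$.

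For the divergence-type relation I would use commutativity to write $D_jU_t^j\Phi_n = P_t\,\lambda_{\langle n\rangle}^{-1/2}D_j^2\Phi_n$ and invoke $D_j^2\Phi_n = -(\lambda^{(j)}_{\langle n\rangle_j}-a_j)\Phi_n$ from Lemma \ref{lem:L44}. Summing over $j$ and using the additivity $\sum_j(\lambda^{(j)}_{\langle n\rangle_j}-a_j)=\lambda_{\langle n\rangle}-A$ gives $\sum_j D_jU_t^j\Phi_n = -(\lambda_{\langle n\rangle}^{1/2}-A\lambda_{\langle n\rangle}^{-1/2})\exp(-t\sqrt{\lambda_{\langle n\rangle}})\Phi_n$, which equals $\partial_t P_t\Phi_n$ precisely when $A=0$ and otherwise equals $\partial_t P_t(f-A\mathfrak{L}^{-1}\Pi_0 f)$ once one records $\mathfrak{L}^{-1}\Pi_0\Phi_n = \lambda_{\langle n\rangle}^{-1}\Phi_n$. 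The harmonicity relations then follow at once from $\mathbb{L}\Phi_n = \lambda_{\langle n\rangle}\Phi_n$ and the eigenvalue invariance of the $D_j$-shift: $\partial_t^2$ brings down $\lambda_{\langle n\rangle}$ while $\mathbb{L}$ acts by the same scalar on $P_t\Phi_n$ and on $U_t^j\Phi_n$. The only genuine bookkeeping is the $A>0$ correction in the third identity, where the computation must be routed carefully through $\mathfrak{L}^{-1}\Pi_0$; everything else reduces to a clean cancellation once the spectral formulas of Lemma \ref{lem:L44} are fixed.
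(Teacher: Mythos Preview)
Your proposal is correct and follows essentially the same route as the paper: reduce to $f=\Phi_n$, compute $U_t^j\Phi_n$ and $D_iU_t^j\Phi_n$ explicitly via Lemma~\ref{lem:L44}, exploit the commutation of $D_j$ with $\mathbb{L}$ (and hence with $P_t$, $U_t^j$) together with $\langle n-(-1)^{n_j}e_j\rangle=\langle n\rangle$, and finish the divergence identity by summing $D_j^2\Phi_n=-(\lambda^{(j)}_{\langle n\rangle_j}-a_j)\Phi_n$. One small slip: in your treatment of the second equation, $D_jP_t\Phi_n$ and $-\partial_tU_t^j\Phi_n$ (not $\partial_tU_t^j\Phi_n$) are the two quantities that both equal $(-1)^{n_j+1}e^{-t\sqrt{\lambda_{\langle n\rangle}}}(\lambda^{(j)}_{\langle n\rangle_j}-a_j)^{1/2}\Phi_{n-(-1)^{n_j}e_j}$; your explanation of the cancellation mechanism is right, only the sign in the displayed target needs flipping.
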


\begin{proof}
It is enough to restrict the situation to $f = \Phi_n$, $n \in \mathbb{N}^d$. Since
$$
U_t^j \Phi_n = (-1)^{n_j+1} \exp\bigg({-t\sqrt{\lambda_{\langle n \rangle}}}\bigg)
	\Bigg( 
	\frac{\lambda^{(j)}_{\langle n \rangle_j}-a_j}{\lambda_{\langle n \rangle}}
	\Bigg)^{1\slash 2} \Phi_{n-(-1)^{n_j}e_j}
$$
and for $i \neq j$, $D_i \Phi_{n-(-1)^{n_j}e_j} = (-1)^{n_i+1}
	(\lambda^{(i)}_{\langle n \rangle_i}-a_i)^{1\slash 2}
	\Phi_{n-(-1)^{n_i}e_i-(-1)^{n_j}e_j}$,
the first identity follows. The second identity may be also easily justified because
$$
D_j P_t \Phi_n = (-1)^{n_j+1} \exp\bigg({-t\sqrt{\lambda_{\langle n \rangle}}}\bigg)
	\Big(\lambda^{(j)}_{\langle n \rangle_j}-a_j\Big)^{1\slash 2}
	\Phi_{n-(-1)^{n_j}e_j}
	= - \frac{\partial}{\partial t} U_t^j \Phi_n.
$$
To verify the third identity, we observe that
$$
\frac{\partial}{\partial t} P_t \Phi_n = - \sqrt{\lambda_{\langle n \rangle}}
	\exp\bigg({-t\sqrt{\lambda_{\langle n \rangle}}}\bigg) \Phi_n
$$
and since $D_j$ commutes with $\mathbb{L}$, thus also with $U_t^j$, we have
\begin{align*}
D_j U_t^j \Phi_n & = U_t^j (D_j \Phi_n) = 
	(-1)^{n_j+1} \sqrt{\lambda_{\langle n \rangle_j}-a_j} \, U_t^j \Phi_{n-(-1)^{n_j}e_j} \\
& = -\Big( \lambda^{(j)}_{\langle n \rangle_j}-a_j\Big)	
	\exp\bigg({-t\sqrt{\lambda_{\langle n \rangle}}}\bigg) 
		\big( \lambda_{\langle n \rangle}\big)^{-1\slash 2} \Phi_n.
\end{align*}
Here the last equality is obtained by recalling that 
$\langle n-(-1)^{n_j}e_j \rangle = \langle n \rangle$ and also noticing that
$n-(-1)^{n_j}e_j - (-1)^{n_j-(-1)^{n_j}}e_j = n$.
Finally, checking the harmonicity relations does not cause any problems.
\end{proof}
We remark that
a suitable information on the growth of the eigenvalues $\lambda_{\langle n \rangle}$ and on the
growth of the eigenfunctions $\Phi_n$ and their derivatives allows to show that the identities
of Proposition \ref{prop:CR} hold in fact for all $f\in L^2(\mathbb{X},\mu)$; see \cite[Proposition 5]{NS1}.

Further support for the symmetrized conjugacy scheme is provided by the identity
$$
\sum_{j=1}^d R_j^2 f = - f + A \mathfrak{L}^{-1}f, \qquad f \in \Pi_0 L^2(\mathbb{X},\mu);
$$
notice that when $A=0$ the potential term above vanishes. This is an analogue of the well-known
relation $\sum_j R_j^2 = - \textrm{Id}$, satisfied by the classical Riesz transforms 
$R_j = \partial_j (-\Delta)^{-1\slash 2}$.

A comment concerning the `complex' picture from Section \ref{sec:sym} is in order.
Note that replacing the symbols $\mathbb{N}^d$, $\langle n \rangle$ and $\Phi_n$ in \eqref{str1}
and \eqref{str2} by $\mathbb{Z}^d$, $|n|$ and $\Psi_n$, respectively, changes neither $\domain \mathfrak{L}$
nor $\mathfrak{L}$. Further, replacing $\lambda_{\langle n \rangle}$ and $\Phi_n$ in \eqref{str3}
by $\lambda_{|n|}$ and $\Psi_n$, respectively, does not change the Riesz operators
(in the one-dimensional setting, the action of $D$ on $\Psi_n$ is 
$D\Psi_n = \textrm{i} \sgn n \sqrt{\lambda_{|n|}-a}\,\Psi_n$, $n \in \mathbb{Z}$, and similarly for $D_j^N$).
Consequently, the Poisson semigroup and the conjugate Poisson integrals remain unchanged.

Finally, notice that in the context of Example 1 the Riesz transform given by \eqref{str3} for $l=d=1$
results in the classic conjugacy mapping
$$
\frac{a_0}2 + \sum_{n=1}^{\infty} \big(a_n \cos nx + b_n \sin nx \big) \mapsto
	\sum_{n=1}^{\infty} \big(b_n \cos nx - a_n \sin nx \big)
$$
($\sum_{n \in \mathbb{Z}} a_n e^{\textrm{i} nx}\mapsto 
	\sum_{n \in \mathbb{Z}} \textrm{i} \sgn n \, a_n e^{\textrm{i} nx}$ in the `complex' picture).

\section{Comments and examples} \label{sec:ex}

First we observe that the setting considered in \cite{NS1} is naturally embedded in the symmetrized
situation. Indeed, given a function $f$ on $\mathcal{X}$, consider its extension $\widetilde{f}$
to $\mathbb{X}$ that is even with respect to all coordinates. Then the definitions and relations
from the symmetrized scheme can be applied to $\widetilde{f}$, and this clearly induces analogous
restricted definitions and relations related to the original space $\mathcal{X}$.
In this way the general definitions of Riesz transforms of order one given in \cite[Section 3]{NS1}
and conjugate Poisson integrals given in \cite[Section 5]{NS1} are contained in the symmetrized
definitions from Section \ref{sec:riesz}. In a similar manner the Cauchy-Riemann type equations
and harmonicity relations \cite[(5.3)-(5.6)]{NS1} are `projections' of the identities from Proposition
\ref{prop:CR}. 
Moreover, by considering extensions of a function $f$ that are odd with respect to one coordinate
and even with respect to all remaining coordinates it can be seen that the `supplementary' operators
and relations established in \cite[Section 6]{NS1} are suitable `projections' of the symmetrized
counterparts from Section \ref{sec:riesz}.

However, the definition of higher order Riesz transforms induced by the symmetrized scheme
in the initial setting is essentially different from that postulated in \cite[Section 4]{NS1}.
Nevertheless, it seems to be far more appropriate and natural. Observe, that the `projection' from
the symmetrized situation via considering functions that are even with respect to all coordinates
leads to higher order derivatives in the initial setting that are of the form
$$
\mathbb{D}^n = 
	\big(\underbrace{\ldots \delta_1 \delta_1^* \delta_1 \delta_1^* \delta_1}_{n_1\; \textrm{components}} \big)
	\big(\underbrace{\ldots \delta_2 \delta_2^* \delta_2 \delta_2^* \delta_2}_{n_2\; \textrm{components}} \big)
	\ldots
	\big(\underbrace{\ldots \delta_d \delta_d^* \delta_d \delta_d^* \delta_d}_{n_d\; 
	\textrm{components}} \big),
$$
where $n=(n_1,\ldots,n_d)$ is a multi-index. This obviously makes a contrast 
(when $n_j>1$ for some $j=1,\ldots,d$) with the derivatives
$$
\delta^n = \delta_1^{n_1} \ldots \delta_d^{n_d}
$$
used in \cite{NS1} to define higher order Riesz transforms.

The definition of higher order Riesz transforms in the initial setting based on $\mathbb{D}^n$
(i.e. the `even projection' of the symmetrized definition) seems to be more natural, in particular
no complications occur in connection with showing $L^2$-boundedness of these operators, see
\cite[Section 4, Section 7.9]{NS1}. The new light on understanding higher order derivatives and Riesz
transforms in the initial setting should also have an important impact on developing  
the theory of Sobolev spaces related to orthogonal expansions. This subject remains to be investigated.

We conclude the paper with several concrete examples involving selected classical orthogonal expansions, 
where the symmetrization procedure can be easily traced explicitly.
More exemplifications can be derived from those given in \cite[Section 7]{NS1}; 
in particular, we follow the notation from there.
For the sake of clarity and simplicity, in Examples 2--4 below we assume that $d=1$.
\vspace{5pt}

\noindent \textbf{Example 2.}
Let $\{h_n : n \in \mathbb{N}\}$ 
be the classical Hermite functions on $\mathbb{R}$ and consider the system $\varphi_k = \sqrt{2}h_{2k}$
on the half-line $\mathcal{X}=(0,\infty)$, $k \in \mathbb{N}$. 
This system is an orthonormal basis in $L^2(\mathcal{X},dx)$ consisting of eigenfunctions
of the harmonic oscillator $L= -\frac{d^2}{dx^2}+x^2$ restricted to $(0,\infty)$.
The related derivatives decomposing $L$ are $\delta = \frac{d}{dx}+x$ and $\delta^*=-\frac{d}{dx}+x$,
see \cite[Section 7.4]{NS1} (notice that $\delta$ is not skew-symmetric). Passing to the
symmetrized situation we get the orthonormal system $\{\Phi_n\}$ in $L^2(\mathbb{R},dx)$, which
coincides, up to signs, with the full system of Hermite functions. The symmetrized derivative is
$
Df = \frac{df}{dx}+x \check{f},
$
where $\check{f}(x)=f(-x)$ is the reflection of $f$, and the symmetrized `Laplacian' has the form
$$
\mathbb{L}f = -\frac{d^2 f}{dx^2} + x^2f + 2f_{\textrm{odd}},
$$
with $f_{\textrm{odd}}=(f-\check{f})\slash 2$
being the odd part of $f$. Notice that $\mathbb{L}$ differs from the harmonic
oscillator by the reflection term above. On the other hand, the derivative $D$ is formally skew-adjoint
in $L^2(\mathbb{R},dx)$.
\vspace{5pt}

\noindent \textbf{Example 3.}
A natural generalization of the previous example is obtained by taking $\mathcal{X}=(0,\infty)$
equipped with the measure $\mu_{\alpha}(dx)=x^{2\alpha+1}dx$, $\alpha > -1$, and considering
the system $\varphi_k = \ell_k^{\alpha}$ of Laguerre functions of convolution type, see
\cite[Section 7.6]{NS1}. Here $\alpha$ is a parameter of type, and the value $\alpha=-1\slash 2$
corresponds to the situation described in Example 2. The related standard `Laplacian' is
$$
L = -\frac{d^2}{dx^2} - \frac{2\alpha+1}{x} \frac{d}{dx} + x^2
$$
and the associated derivatives are of the form $\delta=\frac{d}{dx}+x$, 
$\delta^* = -\frac{d}{dx}+x-\frac{2\alpha+1}{x}$.
Passing to the symmetrized situation we arrive at the system $\{\Phi_n\}$ that coincides, up to signs,
with the system of generalized Hermite functions emerging in the context of the Dunkl harmonic oscillator
and the underlying group of reflections isomorphic to $\mathbb{Z}_2$, see \cite{R1}.
However, the symmetrized `Laplacian'
$$
\mathbb{L}f = -\frac{d^2 f}{dx^2} - \frac{2\alpha+1}{x} \frac{df}{dx} + x^2f 
	+ \frac{2\alpha+1}{x^2}f_{\textrm{odd}} + 2f_{\textrm{odd}}
$$
differs from the Dunkl harmonic oscillator by the term $2f_{\textrm{odd}}$ above. The symmetrized
derivative $Df = \frac{df}{dx}+x\check{f} + \frac{2\alpha+1}{x}f_{\textrm{odd}}$ is
skew-symmetric, which is not the case of $\delta$.
\vspace{5pt}

\noindent \textbf{Example 4.}
Finally, consider an orthonormal basis $\{\varphi_k\}$ of $L^2(\mathcal{X},\mu)$ consisting of
eigenfunctions of a divergence form operator
$$
Lf = - \frac{1}{w} \big( wf'\big)' + af = - \frac{d^2f}{dx^2} - \frac{w'}{w} \frac{df}{dx} + af,
$$
where $w$ is the density of $\mu$ and $a \ge 0$ is a constant. We assume that all the technical assumptions
from Section \ref{sec:ini} are satisfied, in particular $\mathcal{X}$ is an interval of the form $(0,c)$,
$0 < c \le \infty$. The derivatives decomposing $L$ have the form $\delta=\frac{d}{dx}$,
$\delta^* = -\frac{d}{dx}-\frac{w'}{w}$. Performing the symmetrization procedure, we find the symmetrized
`Laplacian'
$$
\mathbb{L}f = -\frac{d^2f}{dx^2} - \frac{w'}{w}\frac{df}{dx} + af - \Big(\frac{w'}{w}\Big)'f_{\textrm{odd}}
$$
and the associated derivative
$$
Df = \frac{df}{dx} + \frac{w'}{w} f_{\textrm{odd}},
$$
which is skew-symmetric in $L^2(\mathbb{X},\mu)$.

A special case of the situation just described occurs when $\varphi_k$ are the (normalized) Hermite
polynomials of successive even orders, $\mathcal{X}=(0,\infty)$, $w(x)=e^{-x^2}$, $a=0$, and
$L = -\frac{d^2}{dx^2}+2x\frac{d}{dx}$ is the classical Ornstein-Uhlenbeck operator restricted to the
positive half-line. Passing to the symmetrized situation one receives the (normalized) system of
Hermite polynomials of all successive orders and the symmetrized `Laplacian' $\mathbb{L}$ which differs
from the Ornstein-Uhlenbeck operator by the reflection term $2f_{\textrm{odd}}$. The point, however, is
that the associated derivative is skew-symmetric.

Another important special case is obtained by choosing $\varphi_k$ to be the normalized Jacobi
trigonometric polynomials considered on the interval $\mathcal{X}=(0,\pi)$ equipped with the measure
$d\mu(\theta)=w(\theta)d\theta = (\sin\frac{\theta}2)^{2\alpha+1}(\cos\frac{\theta}2)^{2\beta+1}d\theta$.
Here $\alpha,\beta>-1$ are parameters of type, and taking $\alpha=\beta=-1\slash 2$ we recover the
situation of cosine expansions already discussed in Example 1. The related `Laplacian' is
$$
L = -\frac{d^2\theta}{d\theta^2} - \frac{\alpha-\beta+(\alpha+\beta+1)\cos\theta}{\sin\theta}
	\frac{d}{d\theta} + \Big(\frac{\alpha+\beta+1}2\Big)^2
$$
and the derivatives decomposing it have the form $\delta = \frac{d}{d\theta}$,
$\delta^* = -\frac{d}{d\theta} - (\alpha+\frac{1}2)\cot\frac{\theta}2+(\beta+\frac{1}2\tan\frac{\theta}2)$.
The symmetrized `Laplacian' is then
\begin{align*}
\mathbb{L}f & = -\frac{d^2f}{d\theta^2} - \frac{\alpha-\beta+(\alpha+\beta+1)\cos\theta}{\sin\theta}
	\frac{df}{d\theta} + \Big(\frac{\alpha+\beta+1}2\Big)^2 f\\ & \quad + 
	\frac{(\alpha+\beta+1)+ (\alpha-\beta)\cos\theta}{\sin^2\theta} f_{\textrm{odd}}
\end{align*}
and the associated skew-symmetric derivative is
$$
Df = \frac{df}{d\theta} + \frac{\alpha-\beta + (\alpha+\beta+1)\cos\theta}{\sin\theta} f_{\textrm{odd}}.
$$
It is worth to note that this $D$ coincides with the Jacobi-Dunkl operator on the interval $(-\pi,\pi)$,
and the extended system in the `complex' picture $\{\Psi_n\}$ consists of trigonometric polynomials
called the Jacobi-Dunkl polynomials; see \cite{frej}, for instance.

\vspace{5pt}

Developing widely understood harmonic analysis for orthogonal expansions is intimately connected
with (sometimes implicit) choice of the associated `Laplacian'. The results of this paper, and
in particular the examples given above, show that in many cases there are reasonable and in some
aspects more natural alternatives for standard `Laplacians' related to various orthogonal systems
appearing in the literature. From this point of view deleting the constant $A$ in the decomposition
\eqref{decL} of $\mathbb{L}$ would lead, in some sense, to canonical `Laplacian' associated to
general orthogonal expansions, bringing the related harmonic analysis closer to the classical case.

\end{document}